\numberwithin{equation}{section}
\theoremstyle{plain} 
\newtheorem{thm}{Theorem}[section]
\newtheorem{corollary}[thm]{Corollary}
\newtheorem{cor}[thm]{Corollary}
\newtheorem{lm}[thm]{Lemma}
\newtheorem{prop}[thm]{Proposition}
\theoremstyle{definition}
\newtheorem{remark}[thm]{Remark}
\newtheorem{rmk}[thm]{Remark}
\newcommand{\umodel}{\mathcal{I}(n,D,p)}
\newcommand{\rmi}{\mathcal{I}}
\newcommand{\prob}[1]{\mathbb{P}\left[#1\right]}
\newcommand{\expect}[1]{\mathbb{E}\left[#1\right]}
\newcommand{\bigoh}[1]{\mathrm{O}\left(#1\right)}
\newcommand{\kring}{K[x_1,\ldots,x_n]}
\newcommand{\intvecs}{\mathbb{Z}^n_{\geq 0}}
\newcommand{\bbZ}{\mathbb{Z}}
\DeclareMathOperator{\supp}{supp}
\DeclareMathOperator{\adeg}{arith-deg}
\begin{document}
\title{Asymptotic Degree of Random Monomial Ideals}
\runtitle{Asymptotic Degree}
\author{Lily Silverstein}
\address{California State Polytechnic University, Pomona\\ \emph{\texttt{lsilverstein@cpp.edu}}}

\author{Dane Wilburne}
\address{The MITRE Corporation\thanks{{\bf Approved for Public Release; Distribution Unlimited. Public Release Case Number 20-0202. The second author's affiliation with The MITRE Corporation is provided for identification purposes only, and is not intended to convey or imply MITRE's concurrence with, or support for, the positions, opinions, or viewpoints expressed by the author. $\copyright$ 2020 The MITRE Corporation. ALL RIGHTS RESERVED.}}\\\emph{\texttt{dwilburne@mitre.org}}}

\author{Jay Yang}
\address{University of Minnesota - Twin Cities\\\emph{\texttt{jkyang@umn.edu}}}

\maketitle

\section*{Abstract}

One of the fundamental invariants connecting algebra and geometry is the degree of an ideal. In this paper we derive the probabilistic behavior of degree with respect to the versatile Erd\H{o}s-R\'enyi-type model for random monomial ideals defined in \cite{rmi}. 

 We study the staircase structure associated to a monomial ideal, and show that in the random case the shape of the staircase diagram is approximately hyperbolic, and this behavior is robust across several random models. Since the discrete volume under this staircase is related to the {summatory higher-order divisor function} studied in number theory, we use this connection and our control over the shape of the staircase diagram to derive the asymptotic degree of a random monomial ideal. 

Another way to compute the degree of a monomial ideal is with a standard pair decomposition. This paper derives bounds on the number of standard pairs of a random monomial ideal indexed by any subset of the ring variables. The standard pairs indexed by maximal subsets give a count of degree, as well as being a more nuanced invariant of the random monomial ideal. 

\medskip
\section{Introduction}

One way to understand a complicated class of mathematical objects is to study random instances. This approach has proven to be particularly fruitful in combinatorics, where, for example, the theory of random graphs has a long and rich history (e.g., \cite{Gilbert,ErdosRenyi,AlonSpencer}).  There is also a robust literature on the properties of random simplicial complexes (e.g., \cite{LinialMeshulam, costafarber,KahleClique,bobrowskiCOMPLEXES,babsonFUNDAMENTAL}). On the algebraic side, the study of random groups has received much attention (e.g., \cite{Gromov}). Work now considered classical includes the study of random varieties, defined by random coefficients on a fixed Newton polytope support, as in \cite{kac, Kouchnirenko1976, sturmfelssurvey} and the references therein. The field of {smooth analysis} studies how algorithmic performance varies under random perturbations of the problem input. Contributions to algebraic geometry using smooth analysis include \cite{BePa08a} and \cite{burgissercucker}. More recently, Ein, Erman, and Lazarsfeld \cite{ein+erman+lazarsfeld} (see also \cite{EinLazarsfeld,EIN+quick}), studied the Betti numbers of modules defined by uniformly random Boij-S\"oderberg coefficients \cite{eisenbud+schreyer}.

Now, a new program is underway in the field of commutative algebra, centering on the study of random monomial ideals \cite{silverstein2019probability,DWthesis,rmi,de2019average,erman2018random}. Monomial ideals are the simplest polynomial ideals, yet they are general enough to capture the entire range of possible values for many important invariants of ideals. In \cite{rmi}, the authors described thresholds for the dimension of Erd\H{o}s-R\'enyi-type random monomial ideals; we extend those results by describing in detail what happens along the phase transitions. In \cite{de2019average}, the authors explained the asymptotic and threshold behavior of projective dimension, genericity, and certain simplicial resolutions for random monomial ideals. In 2018, Erman and Yang studied the Stanley-Reisner ideals associated to random flag complexes \cite{erman2018random}, using the probabilistic method to exhibit concrete examples of the asymptotic behavior of syzygies described in \cite{eisenbud+schreyer}.


In this paper, we advance this program by describing the asymptotic degree of Erd\H{o}s-R\'enyi-type random monomial ideals.
 Specifically, a random monomial ideal $\rmi$ in the polynomial ring $R = K[x_1,\ldots,x_n]$ is produced by randomly selecting its generators independently, with probability $p\in (0,1)$ each, from the set of all monomials in $R$ of positive degree no more than $D$. The resulting distribution on generating sets induces a probability distribution on the set of all monomial ideals in the ring $R$, which we denote by $\umodel$. In this paper, the asymptotic behavior of $\rmi\sim\umodel$ will always refer to the case where $n$ is fixed, $D\to\infty$, and $p=p(D)$ is a function of $D$.
 
 In the specific case where $n=2$ and the dimension of the ideal is $0$, we can also view this as giving a random partition by taking the complement of the staircase diagram. Random partitions and tableaux have been extensively studied, for instance in \cite{Vershik-Kerov,Romik-Piotr}, and yield similar pictures to those we will discuss later. However, there is not a clear relationship between our model of a random monomial ideal and any of the studied random partition models.

If $I$ is a zero-dimensional monomial ideal of $R$, its degree equals the number of standard monomials of $R/I$, which equals the number of integer lattice points under the ``staircase" defined by the generators of $I$ (see, e.g., \cite{CoxLittleOshea,MillerSturmfels} and Figure~\ref{fig:staircase}).
The key observation in this paper is contained in Proposition~\ref{prop:hyperbola}, which
establishes that for $n$ fixed and $D\gg0$, the staircase of a random monomial ideal $\rmi\sim\umodel$ is ``approximately hyperbolic.'' More specifically, as $D$ tends to infinity, the multidegrees $(\alpha_1,\ldots,\alpha_n)$ of the minimal generators and first syzygies of $\rmi$ (the ``outside" and ``inside" corners of the staircase, respectively) will all be contained, with probability one, in regions bounded by hyperboloids of the form
\begin{equation}
\prod_{i=1}^n(\alpha_i+1)=d(D).
\end{equation} 
See Remark \ref{remark:hyperbolas} for appropriate choices of the functions $d(D)$.

When $\rmi$ is zero-dimensional, its degree is bounded by the number of lattice points under these hyperboloids. 
Let $Z(n,d)$ denote the number of integer lattice points in the region bounded above by $\prod_{i=1}^n(\alpha_i+1)=d$, and below by the coordinate axes. That is,
\[
Z(n,d) = \#\{\alpha\in\mathbb{Z}_{\ge 0}^n: \prod_{i=1}^n(\alpha_i+1) \le d\}.
\]  

The value $Z(n,d)$ is also given by the classical number theory problem of computing the \emph{summatory higher-order divisor function}, which is equivalent, as a function of $d$, to the quantity
$\#\{\alpha\in\mathbb{Z}_{>0}^n: \prod_{i=1}^n \alpha_i\le d\}$.
By standard arguments from multiplicative number theory (see, e.g., \cite[Theorem 7.6]{nathanson2008elementary}), the summatory higher-order divisor function is asymptotically equal to
\begin{equation}\label{eq:number-theory-asymptotic}
Z(n,d)=\frac{d(\log d)^{n-1}}{(n-1)!}+\bigoh{c(\log d)^{n-2}}.
\end{equation}
We use this to obtain new asymptotic results in random commutative algebra; such as the following:

\begin{thm}\label{thm:degree-intro-theorem}
  Let $\rmi\sim\umodel$, and suppose $p=D^{-k}$ for $k\in (0,n)$, not an integer. Let $s=\lfloor k\rfloor$. Then there exist constants $C_1,C_2>0$ such that asymptotically almost surely as $D\to\infty$,
  \begin{enumerate} 
  	\item $Z(n-s,D^{k-s-\epsilon})<\deg(\rmi)< Z(n-s,D^{k-s+\epsilon})$, and
  	\item $C_1 D^{k-s-\epsilon}(\log D)^{n-1}<	\deg(\rmi)	<		C_2 D^{k-s+\epsilon}(\log D)^{n-1}$ .
  \end{enumerate}
\end{thm}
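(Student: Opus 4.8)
The plan is to reduce the theorem to two ingredients: (i) the ``approximately hyperbolic'' control on the staircase supplied by Proposition~\ref{prop:hyperbola} together with Remark~\ref{remark:hyperbolas}, and (ii) the number-theoretic asymptotic \eqref{eq:number-theory-asymptotic} for $Z(n,d)$. Part~(2) of the statement will then be an immediate corollary of part~(1) by substituting \eqref{eq:number-theory-asymptotic} into both sides, so the real content is part~(1), and within that the real content is identifying the effective number of variables $n-s$ and the effective bound $D^{k-s\pm\epsilon}$.

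First I would recall why $\deg(\rmi)$ counts lattice points under the staircase. When $p = D^{-k}$ with $s = \lfloor k\rfloor < k < n$, the dimension-threshold results of \cite{rmi} tell us that a.a.s.\ $\dim(R/\rmi) = n - s$; more precisely, a.a.s.\ exactly $s$ of the ``pure power'' generators $x_i^{a}$ are present in a bounded range, so that after a coordinate projection the ideal behaves like a zero-dimensional ideal in $n-s$ variables, while the other $s$ directions contribute only the standard-pair multiplicities. The degree is then the number of standard monomials in those $n-s$ ``bounded'' variables, i.e.\ the number of lattice points strictly under the staircase of the induced zero-dimensional ideal in $\mathbb{Z}_{\ge 0}^{n-s}$. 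I would state this reduction as a lemma (citing the threshold theorem of \cite{rmi}) so that the remaining argument is purely about a zero-dimensional ideal in $n-s$ variables with parameter $p = D^{-k}$, which we think of as $p = (D')^{-k'}$ where $k' = k - s \in (0,1)$ and $D' = D$.

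Next, I would apply Proposition~\ref{prop:hyperbola}: a.a.s.\ every outside corner (minimal generator) and inside corner (first syzygy) of the staircase, viewed in the $n-s$ bounded coordinates, has multidegree $(\alpha_1,\dots,\alpha_{n-s})$ lying between two hyperboloids $\prod(\alpha_i+1) = d_-(D)$ and $\prod(\alpha_i+1) = d_+(D)$, where by Remark~\ref{remark:hyperbolas} one may take $d_\pm(D) = D^{k-s\pm\epsilon}$. The lower hyperboloid lies entirely inside the region of standard monomials and the upper one entirely outside it, so
\[
Z\bigl(n-s,\, D^{k-s-\epsilon}\bigr) \;\le\; \deg(\rmi) \;\le\; Z\bigl(n-s,\, D^{k-s+\epsilon}\bigr)
\]
a.a.s., which is part~(1) (with the mild slack of replacing $\epsilon$ by $\epsilon/2$ to turn $\le$ into $<$, absorbing the $\bigoh{\cdot}$ error terms). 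For part~(2), plug the asymptotic \eqref{eq:number-theory-asymptotic} into each side: $Z(n-s, D^{k-s\mp\epsilon}) = \frac{D^{k-s\mp\epsilon}(\log D^{k-s\mp\epsilon})^{n-s-1}}{(n-s-1)!} + \bigoh{\cdots}$, and since $(\log D^{k-s\mp\epsilon})^{n-s-1} = \Theta\bigl((\log D)^{n-s-1}\bigr)$ and $(\log D)^{n-s-1} \le (\log D)^{n-1}$, one can choose $C_1, C_2 > 0$ (depending on $n, s, \epsilon$) so that the displayed inequality in part~(2) holds a.a.s.

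The main obstacle will be the reduction in the second paragraph: making precise that the zero-dimensional-in-$n-s$-variables picture governs the degree, including checking that the $s$ collapsed directions do not contribute extra standard monomials beyond a bounded multiplicity, and that Proposition~\ref{prop:hyperbola} can be invoked ``coordinatewise'' on the projected staircase. If Proposition~\ref{prop:hyperbola} is already stated in a form that bounds the corners of the full $n$-variable staircase (with the understanding that $s$ of the coordinates are the bounded ones and the rest are unbounded rays), then this obstacle largely dissolves and the proof is a short assembly of the two cited inputs. I would also double-check the edge behavior of the $\epsilon$'s — in particular that $k-s-\epsilon > 0$, which holds for $D$ large since $k - s > 0$ and $\epsilon$ is an arbitrarily small fixed positive constant — so that all exponents and logarithms appearing are positive and the asymptotic \eqref{eq:number-theory-asymptotic} applies.
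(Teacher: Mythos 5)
Your overall outline (control the staircase by hyperboloids, then count lattice points via \eqref{eq:number-theory-asymptotic}) is the right one, but the reduction in your second paragraph — the step you yourself call the main obstacle — is where the actual content lies, and as written it is wrong. First, the structural claim is false: for $1<k<n$ the probability that any pure power $x_i^a$, $a\le D$, is ever selected is about $pD=D^{1-k}\to 0$, so a.a.s.\ there are \emph{no} pure-power generators at all (and for $k<1$ all $n$ directions acquire pure powers); also the dimension statement is backwards, since by \cite{rmi} a.a.s.\ $\dim(R/\rmi)=s=\lfloor k\rfloor$, not $n-s$. The correct reduction, which is Theorem~\ref{thm:general-deg-thm}, is that conditional on $\dim(\rmi)=s$ one has $\deg(\rmi)=\sum_{|T|=n-s}\deg(\rmi|_T)$, a sum over \emph{all} $\binom{n}{s}$ coordinate subsets $T$ (with $\rmi|_T$ obtained by setting the variables off $T$ to $1$), not a count of standard monomials in one distinguished set of ``bounded'' variables; the binomial factor is then absorbed into the $\epsilon$.

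Second, and more seriously, you cannot invoke Proposition~\ref{prop:hyperbola} ``coordinatewise'' on the projected ideal: $\rmi|_T$ is not distributed as an ER-type ideal $\mathcal{I}(n-s,D,p')$ for any $p'$. A monomial $x^\alpha$ with $\alpha\in\bbZ_{\ge 0}^T$ lies in $\rmi|_T$ exactly when one of its $B(\alpha,T)$ lifts of total degree at most $D$ was selected, so the relevant inclusion probabilities are $1-q^{B(\alpha,T)}$, degree-dependent and correlated across $\alpha$. The shift of the hyperbola level from $D^{k}$ to $D^{k-s\pm\epsilon}$ is precisely the content of Theorem~\ref{thm:projected-hyperbola}, whose proof needs the estimate $B(\alpha,T)\ge C\prod_{i\in T}(\alpha_i+1)D^{s}$ and a fresh first-moment computation; Proposition~\ref{prop:hyperbola} as stated only constrains the full $n$-variable staircase near level $D^{k}$ and says nothing about the restricted staircases, so the obstacle does not ``dissolve.'' (The paper's own proof of this theorem is simply Theorem~\ref{thm:general-deg-thm} with $f_s=D^{k-s-\epsilon}$, $h_s=D^{k-s+\epsilon}$, plus the a.a.s.\ dimension statement to remove the conditioning.) A minor further point: in part~(2) your inequality $(\log D)^{n-s-1}\le(\log D)^{n-1}$ only helps for the upper bound; for the lower bound it goes the wrong way, and you must spend a little of the $\epsilon$, e.g.\ $(\log D)^{s}\le D^{\epsilon-\epsilon'}$ for large $D$, to pass from $Z(n-s,D^{k-s-\epsilon'})$ to $C_1D^{k-s-\epsilon}(\log D)^{n-1}$.
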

\begin{proof}
	The first statement is Theorem \ref{thm:general-deg-thm} with the specific choices $f_s=D^{k-s-\epsilon}$, $h_s=D^{k-s+\epsilon}$. The second statement follows from using Equation \ref{eq:number-theory-asymptotic} to give the asymptotics of the first statement. 
\end{proof}

Here and throughout the paper we use the term \emph{asymptotically almost surely} or \emph{a.a.s.} to mean that an event occurs with probability $1$ in the limit as $D\rightarrow \infty$. 

\begin{rmk}
  In Theorem \ref{thm:degree-intro-theorem}, we impose the condition $k\not\in\mathbb{Z}$ in order to present the simplest version of the results in Theorem \ref{thm:general-deg-thm}. When $k$ is not an integer, by \cite{rmi} we know the dimension of $\rmi$ is $\lfloor k\rfloor$ a.a.s., and can therefore dispense with the conditional probability that appears in the full statement of Theorem \ref{thm:general-deg-thm}.
\end{rmk}

 When $I$ is positive-dimensional, $\deg(I)$ is no longer equal the number of lattice points under the monomial staircase, but is still determined by staircase combinatorics. The \textit{standard pair decomposition} of a monomial ideal $I$ is a partition of its standard monomials that simultaneously describes its degree and arithmetic degree. Standard pairs were first introduced in \cite{sturmfels1995bounds}, and are useful both in theory and in computational applications (e.g., \cite{hosten-thomas}).
 
 An \emph{admissible pair} of $I$ is a pair $(x^\alpha, S)$, for $x^\alpha$ a monomial of $\kring$ and $S\subseteq\{x_1,\ldots,x_n\}$, such that $\supp(x^\alpha)\cap S =\emptyset$ and every monomial in $x^\alpha\cdot K[S]$ is a standard monomial of $I$. An admissible pair is called a \emph{standard pair} if it is minimal with respect to the partial order given by $(x^\alpha, S) \le (x^\beta, T)$ if $x^\alpha$ divides $x^\beta$ and $\supp(x^{\beta-\alpha})\cup T \subseteq S$. As an abuse of notation, we will also consider pairs of the form $(x^{\alpha},S)$ for $S\subset [n]$ to be a standard/admissible pair when the pair $(x^{\alpha},\left\{x_i | i\in S\right\})$ is a standard/admissible pair.
The \emph{arithmetic degree} of $I$ equals the number of standard pairs of $I$, while its degree equals the number of standard pairs $(x^\alpha,S)$ with $|S|=\dim I$. For an monomial ideal $I$, we denote it's unique minimal generating set as $G(I)$.
 


In Section~\ref{sec:standard-pairs}, we probabilistically bound the number of standard pairs $(x^\alpha, S)$ of a random monomial ideal for each $S\subseteq \{x_1,\ldots,x_n\}$ as follows: 
\begin{thm}
	\label{thm:sp-count-whp-special-case}
	Fix $S$ a subset of the variables $\{x_1,\ldots,x_n\}$, and let $\rmi\sim\umodel$ where $p=D^{-k}$, $k\in (0,n)$. Then there exists a constant $C>0$ such that asymptotically almost surely as $D\to \infty$,
	\[{CZ(t,D^{k-s-\epsilon})<\#\{\text{standard pairs }(x^\alpha, S) \text{ of }\rmi\}<Z(t,D^{k-s-1+\epsilon})}
	.
	\]  
\end{thm}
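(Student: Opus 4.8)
Throughout, write $s=|S|$, $t=n-s$, and $T=\{x_1,\dots,x_n\}\setminus S$, so every standard pair $(x^{\alpha},S)$ has $\supp(x^{\alpha})\subseteq T$ and is recorded by a monomial in $t$ variables. The plan is to transfer the count to a single auxiliary ideal in $K[T]$ and then run the staircase/hyperbola machinery already developed for the degree. For a monomial ideal $I\subseteq\kring$ let $\pi_T(I)\subseteq K[T]$ be the ideal generated by the images of $G(I)$ under $x_i\mapsto 1$ for $i\in S$ and $x_i\mapsto x_i$ for $i\in T$. A short check with the divisibility criterion for admissibility shows that a pair $(x^{\alpha},S)$ with $\supp(x^{\alpha})\subseteq T$ is admissible for $I$ precisely when $x^{\alpha}\notin\pi_T(I)$, and that the standard‑pair partial order restricts so that $(x^{\alpha},S)$ is a \emph{standard} pair of $I$ if and only if $(x^{\alpha},\varnothing)$ is a standard pair of $\pi_T(I)$ in $K[T]$ — equivalently, $x^{\alpha}\notin\pi_T(I)$ and for every $i\in T$ the ray $\{x^{\alpha}x_i^{e}\}_{e\ge0}$ eventually enters $\pi_T(I)$. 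Thus it suffices to count these monomials for $\pi_T(\rmi)$.

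\emph{The auxiliary ideal and its staircase.} Sorting the monomials of $\kring$ of degree $\le D$ by their $T$‑part, a fixed monomial of $K[T]$ of degree $e$ occurs as the $T$‑part of exactly $N_e=\binom{D-e+s}{s}$ of them, and distinct monomials of $K[T]$ use disjoint such blocks. Hence $\pi_T(\rmi)$ is a random monomial ideal in $t$ variables in which a degree‑$e$ monomial is a generator candidate, independently, with probability $q_e=1-(1-p)^{N_e}$; since $p=D^{-k}$ and $N_e\approx (D-e)^{s}/s!$ we have $q_e\asymp D^{-(k-s)}$ over the low‑degree range that controls the staircase, with $q_e\to 0$ as $e\to D$. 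If $s\ge k$, then a.a.s.\ some generator of $\rmi$ is supported inside $S$, so $\pi_T(\rmi)=(1)$, the count is $0$, and both bounds hold trivially; so assume $s<k$. For $s<k$ the dimension argument of \cite{rmi} yields $\dim\pi_T(\rmi)=\lfloor k-s\rfloor$ a.a.s., and Proposition~\ref{prop:hyperbola} — whose proof is robust to this degree‑weighted model — gives that a.a.s.\ every inside and outside corner $\alpha$ of the staircase of $\pi_T(\rmi)$ lies in the band $D^{\,k-s-\epsilon}\le\prod_{i\in T}(\alpha_i+1)\le D^{\,k-s+\epsilon}$.

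\emph{Counting.} For the lower bound I would take the family $\mathcal F$ of monomials $x^{\alpha}$ with $\supp(\alpha)\subseteq T$, $\prod_{i\in T}(\alpha_i+1)\le D^{\,k-s-\epsilon}$, and $\prod_{j\ne i}(\alpha_j+1)\ge D^{\,k-s-1+\epsilon/2}$ for all $i\in T$ (a fixed positive fraction of the lattice points below the inner hyperbola, so $|\mathcal F|\asymp Z(t,D^{\,k-s-\epsilon})$ by~\eqref{eq:number-theory-asymptotic}). Such an $x^{\alpha}$ fails to be a standard pair with empty index set only if it already lies in $\pi_T(\rmi)$ (probability $\bigoh{D^{-\epsilon}}$, as it sits below the inner hyperbola) or if some coordinate ray leaves $\pi_T(\rmi)$ forever (probability at most $t\exp(-\Omega(D^{\epsilon/2}))$, since the block of generator candidates witnessing that ray $i$ enters has mean size $\Omega(D^{\epsilon/2})$), and these two events are independent because the two blocks of monomials are disjoint. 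So the number of such standard pairs has expectation $(1-\bigoh{D^{-\epsilon}})|\mathcal F|$, and a second‑moment estimate — the relevant generator blocks are only mildly correlated because all monomials of $\mathcal F$ lie below the inner hyperbola — gives concentration, which is the left inequality. For the right inequality one shows any standard pair $(x^{\alpha},\varnothing)$ of $\pi_T(\rmi)$ is confined to a bounded region: a minimal generator witnessing that ray $i$ enters has $\prod(\cdot+1)\ge D^{\,k-s-\epsilon}$ but $i$‑th exponent $\le D$, forcing $\prod_{j\ne i}(\alpha_j+1)\gtrsim D^{\,k-s-1-\epsilon}$ for every $i$, and (see the obstacle below) $x^{\alpha}$ must lie below the outer hyperbola; counting the lattice points of this region via~\eqref{eq:number-theory-asymptotic}, or applying the degree bound of Theorem~\ref{thm:general-deg-thm} to $\pi_T(\rmi)$, gives the claimed upper bound.

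\emph{The main obstacle.} When $s<\lfloor k\rfloor$ the ideal $\pi_T(\rmi)$ is positive‑dimensional, so its staircase has infinitely many standard monomials running out along coordinate subspaces, and the heart of the upper bound is to show that a.a.s.\ none of the monomials ``deep in a tentacle'' is a standard pair with empty index set. I would do this in two steps, both read off from the model of $\pi_T(\rmi)$: first, a.a.s.\ every $(\lfloor k-s\rfloor+1)$‑subset of $T$ carries a low‑degree generator of $\pi_T(\rmi)$, so every standard monomial has at most $\lfloor k-s\rfloor$ ``large'' exponents; second, the coordinate subspace supporting a surviving tentacle contains no generator of $\pi_T(\rmi)$, so along it the corresponding rays never enter $\pi_T(\rmi)$ and the monomial cannot be a standard pair with empty index set. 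After these monomials are removed, the remaining standard pairs lie in the bounded core region governed by the outer hyperbola and the far‑from‑the‑axes condition, and the lattice‑point count finishes the proof. Making ``low‑degree'' and ``large exponent'' quantitative and uniform over the unbounded tentacle families — there being no analogue of this bookkeeping in the zero‑dimensional degree theorem — is where the real work lies.
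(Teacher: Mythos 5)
Your route is essentially the paper's own. Your $\pi_T(\rmi)$ is exactly the restricted ideal $\rmi|_T$ of Theorem~\ref{thm:projected-hyperbola}; your characterization of $(x^\alpha,S)$ being standard as ``$x^\alpha\notin\pi_T(\rmi)$ and every coordinate ray eventually enters $\pi_T(\rmi)$'' is the same reduction that drives Theorem~\ref{thm:std-pair-whp}; your family $\mathcal F$ is (up to the exact form of the subproduct condition) the paper's region $L(f_s,h_{s+1})$; and the assertion $|\mathcal F|\asymp Z(t,D^{k-s-\epsilon})$, which you treat as immediate from Equation~\ref{eq:number-theory-asymptotic}, is precisely the content of Lemma~\ref{lem:sp-technical}, which the paper proves with Davenport's lattice-point theorem and which needs the hypothesis $f^{t-1}\gg h^{t}$ (this is where $p\geq D^{-n+\epsilon}$ is used). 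Likewise ``Proposition~\ref{prop:hyperbola} is robust to the degree-weighted model'' is exactly Theorem~\ref{thm:projected-hyperbola}, which the paper proves directly via the estimate $B(\alpha,T)\geq C\prod_{i\in T}(\alpha_i+1)D^{s}$; the transfer is not free, since $q_e$ decays as $e\to D$ (the paper handles this by restricting to $|\gamma|\leq D/2$). Your lower-bound mechanism (per-point failure probability $\bigoh{D^{-\epsilon}}$ plus concentration) differs mildly from the paper's, which gets ``a.a.s.\ \emph{all} of $L$ are standard pairs'' by union-bounding over minimal generators below $f_s$ rather than over candidate monomials; your version still yields $CZ(t,f_s)$ (Markov on the number of failures suffices, no second moment is needed), though your claim that the two failure events are independent is false (the relevant monomial blocks are nested, not disjoint) and should just be a union bound.

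The genuine gap is the upper bound. The region you count --- below the outer hyperbola $D^{k-s+\epsilon}$ with all $(t-1)$-fold subproducts $\gtrsim D^{k-s-1-\epsilon}$ --- contains your own family $\mathcal F$, so its cardinality is at least $CZ(t,D^{k-s-\epsilon})$, which for $\epsilon<1/2$ dwarfs $Z(t,D^{k-s-1+\epsilon})$; counting it therefore cannot ``give the claimed upper bound.'' What your argument honestly delivers is $\#\{\text{standard pairs }(x^\alpha,S)\}<Z(t,D^{k-s+\epsilon})$, i.e.\ the bound $Z(t,h_s)$ of Theorem~\ref{thm:sp-count-whp}, obtained there in two lines from admissibility plus Theorem~\ref{thm:projected-hyperbola}(2) --- which also makes most of your ``main obstacle'' discussion of tentacles unnecessary. (The exponent $k-s-1+\epsilon$ in the printed statement is inconsistent with Theorem~\ref{thm:sp-count-whp} and with the lower bound, and appears to be a slip in which $h_{s+1}$, which belongs only in the lower-bound region $L(f_s,h_{s+1})$, was written where $h_s$ is meant; no argument, yours or the paper's, can establish it as literally stated.) So: same method as the paper, adequate for the result the paper actually proves, but the final counting claim for the upper bound is wrong as written, and the two quantitative steps where the real work lies (Lemma~\ref{lem:sp-technical} and Theorem~\ref{thm:projected-hyperbola}) are asserted rather than proved.
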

\begin{proof}
	This is Theorem \ref{thm:sp-count-whp} for the special case $p=D^{-k}$, using the specific choices $f_s=D^{k-s-\epsilon}$ and $h_{s+1}=D^{k-s-1+\epsilon}$. 
\end{proof}

\begin{figure}[h]
	\centering
	\captionsetup{width=.9\linewidth}
	\includegraphics[height=0.6\linewidth]{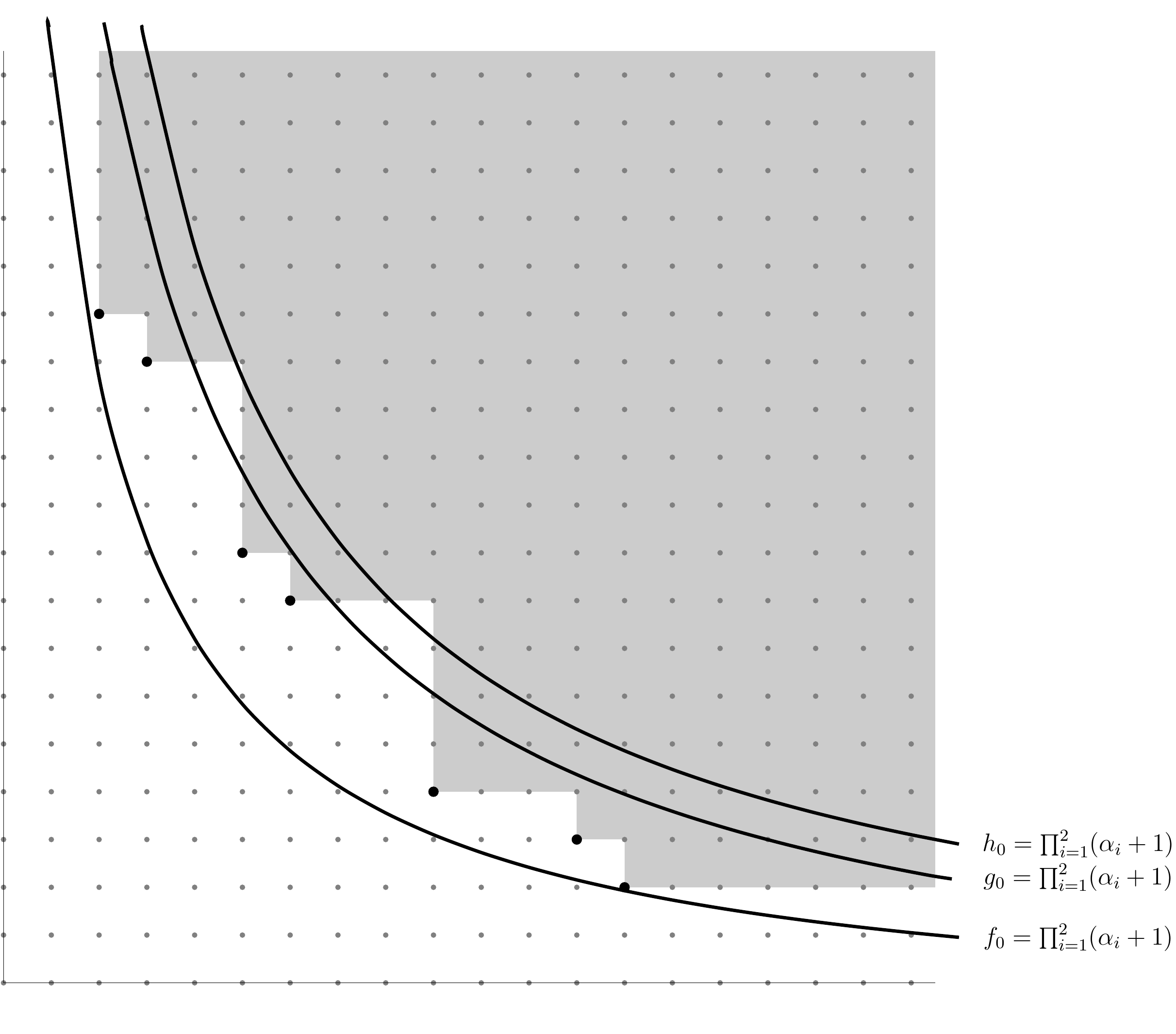}
	\caption{This figure illustrates the role of the functions $f_s$, $g_s$, and $h_s$, as defined in Remark \ref{remark:hyperbolas}, in relation to a random monomial ideal $\rmi\sim\umodel$. For this figure, $s=0$ and $n=2$.}
	\label{fig:staircase}
\end{figure}

\begin{remark}
  \label{remark:hyperbolas}
  For all the results of this paper, it will be convenient to fix the following set of functions which will serve as lower and upper bounds throughout.
Let $\rmi\sim\umodel$ with $D\to\infty$ and fix $n\geq s\geq 0$. 
Then, for $C$ sufficiently small depending only on $n$ and $s$, fix functions  $f_s(D)$, $g_s(D)$, $h_s(D)$ satisfying
    \[pD^{s}f_s(\log f_s)^{t-1}\rightarrow 0,\qquad
  pD^{t}\exp(-CpD^{s}g_s)\rightarrow 0,\qquad
  \mathrm{and}\qquad
  D^{t}\exp(-CpD^{s}h_s)\rightarrow 0.\]

  In particular, as seen in Theorems \ref{thm:degree-intro-theorem} and \ref{thm:sp-count-whp-special-case}, the functions $f_s=D^{-s-\epsilon}/p$ and $g_s=h_s=D^{-s+\epsilon}/p$ satisfy the conditions of the above definition.

These functions describe upper and lower bounds on the multidegrees of generators and syzygies of $\rmi$ in the theorems in this paper as illustrated in Figure~\ref{fig:staircase} for the case $s=0$, $n=2$. As described in Proposition~\ref{prop:hyperbola} and Theorem~\ref{thm:projected-hyperbola}, $f_0$ and $g_0$ provide asymptotic lower and upper bounds on the generators of a random ideal in that asymptotically almost surely, all minimal generators $x_1^{\alpha_1}x_2^{\alpha_2}$ are bounded by  $f_0<(\alpha_1+1)(\alpha_2+1)<g_0$. Visually, this corresponds to the bottom corners of the monomial staircase (in black) lying between two hyperbolic curves.

The function
$h_0$ gives an upper bound to the full staircase in that a.a.s.~every monomial satisfying $(\alpha_1+1)(\alpha_2+1)>h_0$ belongs to $\rmi$. Visually, this corresponds to every lattice point above the upper hyperbola being in the shaded region above the monomial staircase.

\end{remark}

\section{The zero-dimensional case}
When $\rmi$ is zero-dimensional, its standard pairs are exactly the pairs $(x^\alpha,\emptyset)$ where $x^\alpha$ is a standard monomial of $R/\rmi$. In other words, enumerating standard pairs is equivalent to enumerating standard monomials in the zero-dimensional case; this count is also equivalent to the degree of the zero-dimensional ideal.

Proposition~\ref{prop:hyperbola} makes precise the image in Figure~\ref{fig:staircase}. In particular, it shows that the staircase diagram of a monomial ideal is bounded below by $f_0$ and above by $h_0$, with $g_0$ providing a tighter bound on just the generators as opposed to the whole of the staircase diagram. As a remark, Proposition~\ref{prop:hyperbola} uses the same conventions and notation defined in Remark~\ref{remark:hyperbolas}, except that in this section we can make the explicit choice $C=1$.

\begin{prop}\label{prop:hyperbola} Let $\rmi\sim\umodel$ with $D\to\infty$ and $p=p(D)$ a function of $D$. Fix functions 
	$f_0(D)$, $g_0(D)$, $h_0(D)$ satisfying
	\[pf_0(\log f_0)^{n-1}\rightarrow 0,\qquad
	pD^{n}\exp(-pg_0)\rightarrow 0,\qquad
	\mathrm{and}\qquad
	D^{n}\exp(-ph_0)\rightarrow 0.\]
	
  Then,
  \begin{enumerate}
\item  
$\displaystyle\prob{f_0< \prod_{i=1}^n(\alpha_i+1)< g_0\mathrm{\ for\ all\ } x^{\alpha}\in G(\rmi)}\to 1,$ and
\item 
$
\displaystyle\prob{x^{\alpha}\in\rmi \mathrm{\ for\ all\ } x^{\alpha}\mathrm{\ s.t.\ }h_0< \prod_{i=1}^n(\alpha_i+1)\mathrm{\ and\ } |\alpha|\le D}\to 1.$
\end{enumerate}

\end{prop}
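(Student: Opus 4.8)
The plan is to prove both statements via first-moment (union bound) arguments, exploiting that in the ER-type model each monomial of degree $\le D$ is chosen as a generator independently with probability $p$. Throughout, the key probabilistic input is: for a fixed monomial $x^\alpha$ with $|\alpha|\le D$, the probability that $x^\alpha$ is \emph{not} in $\rmi$ is at most the probability that none of its divisors of positive degree were selected as generators, which is $(1-p)^{N(\alpha)}$ where $N(\alpha)$ is the number of monomials dividing $x^\alpha$ of positive degree, i.e. $N(\alpha)=\prod_{i=1}^n(\alpha_i+1)-1$. Hence $\prob{x^\alpha\notin\rmi}\le(1-p)^{\prod(\alpha_i+1)-1}\le\exp\bigl(-p(\prod(\alpha_i+1)-1)\bigr)$.

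For statement (2): I would take a union bound over all $x^\alpha$ with $|\alpha|\le D$ and $\prod(\alpha_i+1)>h_0$. The number of such lattice points is at most the total number of monomials of degree $\le D$, which is $\binom{D+n}{n}=\bigoh{D^n}$. Each contributes probability at most $\exp(-p(h_0-1))\asymp\exp(-ph_0)$. So the failure probability is $\bigoh{D^n\exp(-ph_0)}\to 0$ by the hypothesis $D^n\exp(-ph_0)\to0$. This is the easy half.

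For statement (1), the upper bound $\prod(\alpha_i+1)<g_0$ on minimal generators: a monomial $x^\alpha$ can be a minimal generator only if it lies outside $\rmi$ before it is (possibly) chosen, equivalently none of its proper divisors of positive degree were selected. For $x^\alpha$ with $\prod(\alpha_i+1)>g_0$, the number of proper positive-degree divisors is $\prod(\alpha_i+1)-2\ge g_0-2$, so the probability $x^\alpha$ is a minimal generator is at most $(1-p)^{g_0-2}\le\exp(-p(g_0-2))$. Union bound over the $\bigoh{D^n}$ relevant monomials, using $pD^n\exp(-pg_0)\to0$ (the extra factor of $p$ absorbs the shift and also bounds the probability $x^\alpha$ is actually selected, though we don't even need that), gives failure probability $\to0$. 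The lower bound $f_0<\prod(\alpha_i+1)$ is the subtler part: I must show no minimal generator sits \emph{too low}. A monomial $x^\alpha$ with $\prod(\alpha_i+1)<f_0$ fails to be a minimal generator once some divisor of it of positive degree strictly smaller is selected; but a cleaner route is to bound the \emph{expected number} of such low generators directly by $\sum_{\alpha:\prod(\alpha_i+1)<f_0}p$, since the probability $x^\alpha$ is selected at all is exactly $p$, and a generator must be selected. The number of lattice points with $\prod(\alpha_i+1)<f_0$ is $Z(n,f_0)=\bigoh{f_0(\log f_0)^{n-1}}$ by Equation~\eqref{eq:number-theory-asymptotic}, so the expected number of minimal generators in this region is $\bigoh{pf_0(\log f_0)^{n-1}}\to0$ by hypothesis; Markov's inequality finishes it.

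The main obstacle is making the lower-bound argument in (1) genuinely give the \emph{right} count rather than an overcount: one should note that minimal generators in the low region are a subset of \emph{all} selected monomials in that region, whose expected number is $p\cdot Z(n,f_0)$, and it is precisely the number-theoretic asymptotic $Z(n,d)\sim d(\log d)^{n-1}/(n-1)!$ that lets the hypothesis $pf_0(\log f_0)^{n-1}\to0$ do its job — so the delicate point is lining up the divisor-function asymptotic with the probabilistic union bound, and then assembling the three events (low generators, high generators, uncovered high monomials) with a final union bound so that all three hold simultaneously a.a.s.
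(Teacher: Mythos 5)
Your argument is the same first-moment/union-bound proof as the paper's: bound $\prob{x^\alpha\in G(\rmi)}$ and $\prob{x^\alpha\notin\rmi}$ by powers of $(1-p)$ via divisor counting, union-bound over the $\bigoh{D^n}$ monomials of degree at most $D$ for the $g_0$ and $h_0$ parts, and for the $f_0$ part bound the expected number of low minimal generators by $p\cdot Z(n,f_0)$ using the divisor-function asymptotic; all three pieces are assembled correctly. One correction: in the $g_0$ step the factor $p$ recording that $x^\alpha$ itself must be selected is genuinely needed, not optional as your parenthetical suggests --- without it the union bound gives $\bigoh{D^n\exp(-pg_0)}$, which the hypothesis $pD^n\exp(-pg_0)\to 0$ does not control (take, e.g., $p=D^{-2n}$ and $g_0=1/p$, where $pD^n\exp(-pg_0)\to 0$ but $D^n\exp(-pg_0)\to\infty$); keeping the factor $p$, exactly as in the paper's estimate $pq^{g_0-1}D^n\sim pD^n\exp(-pg_0)$, is what closes the argument.
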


\begin{proof}
For the first statement, observe that
$
\prob{x^{\alpha}\in G(\rmi)}=pq^{-1+\prod_{i=1}^n (\alpha_i+1)}.$
Therefore
\begin{align}\label{eq:expected-min-gens}
  \expect{\#\{x^{\alpha}\in G(\rmi):\prod_{i=1}^n (\alpha_i+1)>g_0\}}
  &=\sum_{\substack{\alpha\in\intvecs\\\text{ s.t. }|\alpha|\le D\text { and } \prod (\alpha_i+1)>g_0}}pq^{-1+\prod_{i=1}^n (\alpha_i+1)}\\ \nonumber
  &\le pq^{g_0-1}\sum_{\substack{\alpha\in\intvecs\\\text{ s.t. }|\alpha|\le D}}1\\ \nonumber
  &\le pq^{g_0-1} D^n\\ \nonumber
  &=p(1-p)^{g_0-1} D^n\\ \nonumber
  &\sim pD^{n}\exp{(-pg_0)},
\end{align}
which goes to $0$ as $D \to \infty$ by hypothesis.
Moreover,
\begin{align*}
  \expect{\#\{x^{\alpha}\in G(\rmi):\prod_{i=1}^n (\alpha_i+1)<f_0\}}&\leq\sum_{\substack{\alpha\in\intvecs\\\text{ s.t. }\prod (\alpha_i+1)<f_0}}pq^{-1+\prod_{i=1}^n (\alpha_i+1)}\\
  &\le p\sum_{\substack{\alpha\in\intvecs\\\text{ s.t. } \prod (\alpha_i+1)<f_0}}1\\
  &= p\left(\frac{f_0\left(\log f_0\right)^{n-1}}{(n-1)!}+\bigoh{f_0\left(\log f_0\right)^{n-2}}\right),
\end{align*}
where the last line follows from Equation \ref{eq:number-theory-asymptotic}.  This last expression goes to 0 as $D\to\infty$, again by hypothesis. Thus, asymptotically almost surely, every minimal generator $x^\alpha$ satisfies $f_0< \prod_{i=1}^n(\alpha_i+1)< g_0$.

To prove the second statement, observe that $
\prob{x^{\alpha}\notin\rmi}=q^{-1+\prod_{i=1}^n (\alpha_i+1)},$ and so
\begin{align*}
\expect{\#\{x^{\alpha} \not \in \rmi :  \prod (\alpha_i+1)>h_0  \mbox{  and  } |\alpha| \le D \}}&=\sum_{\substack{\alpha\in\intvecs\\\text{ s.t. }|\alpha|\le D\text { and } \prod (\alpha_i+1)>h_0}} q^{-1+\prod_{i=1}^n (\alpha_i+1)}.
\end{align*}
Using the same estimates as in Equation \ref{eq:expected-min-gens}, this expectation is bounded above by $q^{h_0}D^n\sim D^n\exp(-ph_0)$, which goes to zero by hypothesis.
\end{proof}

If $\rmi$ is zero-dimensional, the previous proposition immediately implies bounds on the degree of $\rmi$. 

\begin{cor}\label{cor:asymptotic-degree-bounds}
  For $\rmi\sim\umodel$, with $D\to\infty$ and $p=p(D)$ satisfying $1/D\ll p\leq 1$, we have
\[
\prob{Z(n, f_0)\leq \deg(\rmi)\leq Z(n, h_0)}\to 1.
\]
\end{cor}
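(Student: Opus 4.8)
The plan is to derive the corollary directly from Proposition~\ref{prop:hyperbola} by the obvious sandwiching argument, once we check that the hypotheses are compatible. First I would observe that since $1/D \ll p \leq 1$, we can choose functions $f_0, g_0, h_0$ satisfying the three conditions of Proposition~\ref{prop:hyperbola}: for instance $g_0 = h_0 = D^\epsilon/p$ works because $pD^n\exp(-pg_0) = pD^n\exp(-D^\epsilon) \to 0$, and $f_0 = 1/(p\,(\log(1/p))^{n})$ (or any function with $pf_0(\log f_0)^{n-1}\to 0$ and $f_0 \to \infty$) works for the lower bound; the hypothesis $1/D \ll p$ guarantees such an $f_0$ exists with $f_0 \to \infty$, so that $Z(n,f_0)$ is a nontrivial lower bound. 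These are exactly the choices implicit in Remark~\ref{remark:hyperbolas} with $s=0$.

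Next I would set up the two events. Let $A$ be the event that every minimal generator $x^\alpha \in G(\rmi)$ satisfies $f_0 < \prod_{i=1}^n(\alpha_i+1) < g_0$, and let $B$ be the event that every $x^\alpha$ with $\prod_{i=1}^n(\alpha_i+1) > h_0$ and $|\alpha| \leq D$ lies in $\rmi$. By Proposition~\ref{prop:hyperbola}, $\prob{A} \to 1$ and $\prob{B} \to 1$, hence $\prob{A \cap B} \to 1$. The claim is that on the event $A \cap B$ (intersected, if necessary, with the event that $\rmi$ is zero-dimensional, which by \cite{rmi} also has probability tending to $1$ in the relevant range, or which is simply assumed in the statement), the inequality $Z(n,f_0) \leq \deg(\rmi) \leq Z(n,h_0)$ holds deterministically.

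For the upper bound: on event $B$, every lattice point $\alpha$ with $|\alpha| \leq D$ and $\prod(\alpha_i+1) > h_0$ corresponds to a monomial in $\rmi$; since $\rmi$ is generated in degree $\leq D$, in fact every monomial $x^\alpha$ with $\prod(\alpha_i+1) > h_0$ is a multiple of some minimal generator of degree $\leq D$, so it lies in $\rmi$ — here I would spell out the small point that if $\prod(\alpha_i+1) > h_0$ then some ``truncation'' of $\alpha$ to total degree $\leq D$ still has product exceeding $h_0$, using that $h_0 = D^\epsilon/p$ is $o(D^n)$. Therefore the standard monomials of $R/\rmi$ are all contained in $\{\alpha : \prod(\alpha_i+1) \leq h_0\}$, giving $\deg(\rmi) = \#\{\text{standard monomials}\} \leq Z(n,h_0)$. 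For the lower bound: on event $A$, every minimal generator has $\prod(\alpha_i+1) > f_0$, so every monomial $x^\beta$ with $\prod(\beta_i+1) \leq f_0$ is divisible by no minimal generator (a proper divisor of such a $\beta$ has strictly smaller product), hence is a standard monomial; this yields $\deg(\rmi) \geq Z(n,f_0)$.

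The main obstacle is the degree-truncation subtlety in the upper bound: Proposition~\ref{prop:hyperbola}(2) only controls monomials of total degree $\leq D$, whereas $R/\rmi$ has standard monomials of arbitrarily high total degree in principle — so one must argue that no standard monomial can have $\prod(\alpha_i+1) > h_0$, even in high total degree, by reducing to the bounded-degree case via the fact that $\rmi$ is generated in degrees $\leq D$. Everything else is bookkeeping: verifying the existence of admissible $f_0, g_0, h_0$ under $1/D \ll p$, and the elementary observation that products of shifted coordinates are monotone under divisibility.
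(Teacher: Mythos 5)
Your route is the same as the paper's: combine Proposition~\ref{prop:hyperbola} with zero-dimensionality (via \cite[Corollary 1.2]{rmi}, valid since $1/D\ll p$) and sandwich the number of standard monomials between $Z(n,f_0)$ and $Z(n,h_0)$; your lower-bound half and the bookkeeping about admissible $f_0,g_0,h_0$ are fine. You are also right to flag that Proposition~\ref{prop:hyperbola}(2) only controls monomials of total degree at most $D$ --- the paper's own two-line proof passes over this silently. The problem is that your proposed patch, ``if $\prod(\alpha_i+1)>h_0$ then some truncation of $\alpha$ to total degree $\le D$ still has product exceeding $h_0$,'' is false in part of the range the corollary allows. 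Take $p=\log D/D$ (so $1/D\ll p$) and $h_0=D^{\epsilon}/p=D^{1+\epsilon}/\log D>D+1$: then $\alpha=(M,0,\dots,0)$ with $M+1>h_0$ has product exceeding $h_0$, yet every $\beta\le\alpha$ with $|\beta|\le D$ has $\prod(\beta_i+1)\le D+1<h_0$. (For larger admissible $h_0$ the claim also fails by constant factors for balanced monomials near the corner of the box, e.g.\ $\alpha=(D-1,D-1)$ when $h_0\in(D^2/4,D^2)$ in $n=2$.) So as written, the step you rely on for the upper bound would fail precisely when $p$ is close to $1/D$, which the hypothesis $1/D\ll p\le 1$ permits.

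The gap is repairable along the lines you already set up. On the zero-dimensionality event, every standard monomial lies in the box $[0,D-1]^n$ (a pure power $x_i^M$ with $M\ge D$ is a multiple of the pure-power generator of degree $\le D$ witnessing $\dim=0$), so it suffices to kill box monomials with product $>h_0$. For $\alpha$ in the box, the truncation $\beta_i=\min(\alpha_i,\lfloor D/n\rfloor)$ satisfies $\beta\le\alpha$, $|\beta|\le D$, and $\prod(\beta_i+1)\ge n^{-n}\prod(\alpha_i+1)$, so one either reruns the first-moment estimate of Proposition~\ref{prop:hyperbola}(2) over the box using the divisor count $\ge n^{-n}\prod(\alpha_i+1)$ (this is exactly the role of the constant $C$ permitted in Remark~\ref{remark:hyperbolas}, and it is harmless for the natural choices such as $h_0=D^{\epsilon}/p$), or concedes the upper bound $Z(n,n^n h_0)$, which has the same asymptotics. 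With that correction your argument coincides with the paper's proof, which simply cites Proposition~\ref{prop:hyperbola} and zero-dimensionality without addressing monomials of degree above $D$ at all.
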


\begin{proof} 
	Whenever $\rmi$ is zero dimensional, its degree equals the number of standard monomials of $R/\rmi$. And by Proposition~\ref{prop:hyperbola}, every monomial $x^\alpha$ with $\prod(\alpha_i+1) < f_0$ is a standard monomial of $R/\rmi$, so there must be at least $Z(n, f_0)$ standard monomials. On the other hand, by the same proposition we know that every monomial $x^\alpha$ with $\prod(\alpha_i+1) > h_0$ is \textit{not} a standard monomial of $R/\rmi$, so there are at most $Z(n,h_0)$ standard monomials. 
	
	For $1/D\ll p \leq 1$, it follows from \cite[Corollary 1.2]{rmi} that $\prob{\dim(R/\rmi)=0}\to1$ and thus, with probability one, $\deg(\rmi)$ is bounded between these two quantities.
\end{proof}

\section{The positive dimensional case}
This next proposition will be necessary to compute the degree of higher dimensional ideals. For any $T\subseteq [n]$, Theorem~\ref{thm:projected-hyperbola} describes probabilistic constraints on the $x^\alpha$ such that $(x^\alpha, T^C)$ is an admissible pair of $\rmi$. Analogously, Proposition~\ref{prop:hyperbola} can be viewed as giving constraints on admissible pairs of the form $(x^\alpha,\emptyset)$. However, Proposition~\ref{prop:hyperbola} gives slightly better bounds than simply substituting $T=\left\{1,\ldots,n\right\}$ into Theorem~\ref{thm:projected-hyperbola}.

\begin{thm}
  \label{thm:projected-hyperbola}
  Let $\rmi\sim\umodel$ with $D\to\infty$. For $T\subseteq [n]$, define $\rmi|_{T}$ to be the ideal in $K[x_i : i\in T]$ given by substituting $x_i\mapsto 1$ for $i\notin T$. Set $t:=|T|$ and $s:=n-t$. Then there exists a constant $C$ depending only on $n$ and $t$ such that, for any fixed functions  $f_{s}(D)$, $g_{s}(D)$, $h_{s}(D)$ as in Remark~\ref{remark:hyperbolas}, the following statements hold:
\begin{enumerate}
\item  $
\displaystyle
\prob{f_s< \prod_{i\in T}(\alpha_i+1)< g_s\mathrm{\ for\ all\ } x^{\alpha}\in G(\rmi|_T)}\to 1,
$ and
\item 
$
\displaystyle
\prob{x^{\alpha}\in\rmi|_T \mathrm{\ for\ all\ } x^{\alpha}\mathrm{\ s.t.\ }h_s< \prod_{i\in T} (\alpha_i+1)\mathrm{\ and\ } |\alpha|\le D}\to 1.
$
\end{enumerate}
\end{thm}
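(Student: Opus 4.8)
The plan is to translate membership in $\rmi|_T$ and in $G(\rmi|_T)$ into events about the randomly chosen generators of $\rmi$, and then to run first-moment arguments exactly as in the proof of Proposition~\ref{prop:hyperbola}. For $c\in\intvecs$ let $c|_T\in\intvecs$ be the vector agreeing with $c$ on the coordinates of $T$ and vanishing off $T$; for $\alpha$ supported on $T$ write $N(\alpha):=\prod_{i\in T}(\alpha_i+1)$ and
\[
B_\alpha:=\{\,c\in\intvecs \;:\; c|_T\le\alpha\ \text{ and }\ 1\le|c|\le D\,\}.
\]
Because $\rmi$ is generated by the chosen monomials $x^c$, the ideal $\rmi|_T$ is generated by their restrictions $x^{c|_T}$, which gives the dictionary: $x^\alpha\in\rmi|_T$ exactly when some chosen generator lies in $B_\alpha$; and $x^\alpha\in G(\rmi|_T)$ exactly when some chosen generator $x^c$ has $c|_T=\alpha$ while no chosen generator has $c|_T\lneq\alpha$ (the second clause because a proper divisor $x^{\alpha'}$ of $x^\alpha$ lies in $\rmi|_T$ precisely when some chosen $x^c$ has $c|_T\le\alpha'\lneq\alpha$). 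In particular every minimal generator of $\rmi|_T$ is the restriction of a chosen generator, hence has $|\alpha|\le D$; and the two clauses describing $x^\alpha\in G(\rmi|_T)$ concern disjoint families of monomials, so they are independent, just as in Proposition~\ref{prop:hyperbola}. (For $\alpha$ with support outside $T$ in part~(2), read $x^\alpha\in\rmi|_T$ as $x^{\alpha|_T}\in\rmi|_T$; since $|\alpha|\le D$ forces the restriction to have degree at most $D$ and $N$ sees only $\alpha|_T$, there is no loss in taking $\alpha$ supported on $T$ throughout.)

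The technical heart is a two-sided estimate, uniform over $\alpha$ supported on $T$ with $|\alpha|\le D$,
\[
C\,N(\alpha)\,D^{s}\ \le\ |B_\alpha|\ \le\ C'\,N(\alpha)\,D^{s},
\]
for constants $0<C\le C'$ depending only on $n$ and $t=|T|$; I would take the lower constant $C$ to be (a small enough multiple of) the constant $C$ appearing in Theorem~\ref{thm:projected-hyperbola} and in Remark~\ref{remark:hyperbolas}. The lower bound is the delicate direction, because when $|\alpha|$ is close to $D$ the degree cap $|c|\le D$ leaves almost no room for the $s$ coordinates outside $T$. The device for getting past this is the involution $\beta\mapsto\alpha-\beta$ on the set of $T$-supported vectors $\beta\le\alpha$: it shows that at least half of those $N(\alpha)$ vectors $\beta$ satisfy $|\beta|\le|\alpha|/2$, and for each such $\beta$ the coordinates off $T$ may be any monomial of degree $\le D-|\beta|\ge D/2$, so $\{c\in B_\alpha:c|_T=\beta\}$ has $\binom{\lfloor D/2\rfloor+s}{s}=\Theta(D^{s})$ elements; summing over these $\beta$ gives the lower bound. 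The same reasoning shows $\{c:c|_T\lneq\alpha,\ |c|\le D\}$ has size $\Theta(N(\alpha)D^{s})$ once $N(\alpha)\to\infty$ (it differs from $B_\alpha$ only by the $O(D^{s})$ points with $c|_T=\alpha$), while $\{c:c|_T=\alpha,\ |c|\le D\}$ has size $O(D^{s})$.

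Granting the dictionary and the estimate, the three conclusions are immediate from Markov's inequality applied to first moments, in complete parallel with Proposition~\ref{prop:hyperbola}. First, the $f_s$ lower bound in part~(1): since every minimal generator of $\rmi|_T$ is a restriction of a chosen generator, $\prob{x^\alpha\in G(\rmi|_T)}\le p\binom{D+s}{s}=\bigoh{pD^{s}}$, and summing over the $Z(t,f_s)$ vectors $\alpha$ supported on $T$ with $N(\alpha)\le f_s$, together with \eqref{eq:number-theory-asymptotic}, bounds the expected number of such generators by $\bigoh{pD^{s}f_s(\log f_s)^{t-1}}\to0$. Second, the $g_s$ upper bound in part~(1): $\prob{x^\alpha\in G(\rmi|_T)}\le(1-p)^{|\{c\,:\,c|_T\lneq\alpha,\,|c|\le D\}|}\le\exp(-CpD^{s}N(\alpha))\le\exp(-CpD^{s}g_s)$ whenever $N(\alpha)>g_s$, and since only $\bigoh{D^{t}}$ vectors $\alpha$ supported on $T$ have $|\alpha|\le D$, the expected number of offending generators is a fixed power of $D$ times $\exp(-CpD^{s}g_s)$, which $\to0$ by the defining property of $g_s$. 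Third, part~(2): $\prob{x^\alpha\notin\rmi|_T}=(1-p)^{|B_\alpha|}\le\exp(-CpD^{s}N(\alpha))\le\exp(-CpD^{s}h_s)$ whenever $N(\alpha)>h_s$, and summing over the $\bigoh{D^{t}}$ vectors $\alpha$ supported on $T$ with $|\alpha|\le D$ and $N(\alpha)>h_s$ gives $\bigoh{D^{t}\exp(-CpD^{s}h_s)}\to0$. In each case Markov's inequality upgrades ``expected count $\to0$'' to ``the exceptional event has probability $\to0$,'' and intersecting the (at most three) complementary events proves parts~(1) and~(2).

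The step I expect to be the real obstacle is the uniform estimate $|B_\alpha|=\Theta(N(\alpha)D^{s})$ — concretely its lower bound when $|\alpha|$ has order $D$, which the involution argument is designed to handle. Beyond that, the only genuinely new ingredient compared with Proposition~\ref{prop:hyperbola} is the observation that minimal generators of $\rmi|_T$ are restrictions of chosen generators of $\rmi$ (this is what forces the $f_s$ bound to take the right form), along with the routine bookkeeping of the constant $C=C(n,t)$ and of polynomial prefactors against the conditions of Remark~\ref{remark:hyperbolas}.
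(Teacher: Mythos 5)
Your proposal is correct and follows essentially the same route as the paper: both hinge on the count $B(\alpha,T)=\#\{c:\ c|_T\le\alpha,\ |c|\le D\}$, the uniform lower bound $B(\alpha,T)\ge C\prod_{i\in T}(\alpha_i+1)D^{s}$ obtained by restricting to sub-vectors of degree at most $|\alpha|/2$ (the paper halves coordinatewise, you use the involution $\beta\mapsto\alpha-\beta$ — a cosmetic difference), and the same three first-moment/Markov estimates checked against the conditions of Remark~\ref{remark:hyperbolas}. Your explicit ``dictionary'' for membership in $G(\rmi|_T)$ just makes precise what the paper's expectation formulas use implicitly, so there is nothing substantively different to compare.
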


\begin{remark}
  For example, suppose $p=D^{-k}$ and fix any $\epsilon>0$. As in the theorem, fix a subset of the variables $T$ and let $s:= n-|T|$. Then $f_s=D^{k-s-\epsilon}$, $g_s=h_s=D^{k-s+\epsilon}$ are examples of functions satisfying the hypotheses of Theorem~\ref{thm:projected-hyperbola}. This implies that the corners of the staircase diagram for $\rmi|_{T}$ are confined to an arbitrarily narrow strip around the curve $\prod_{i\in T}(\alpha_i+1)=D^{k-s}$. In particular, the admissible pairs of the form $(x^{\alpha},T^{C})$ have $\alpha$ values bounded by $\prod_{i\in T}(\alpha_i+1)\leq D^{k-s+\epsilon}$.
  
  Note that the map from $\rmi\subseteq\kring$ to $\rmi|_T\subseteq K[x_i:i\in T]$ defined in the theorem is equivalent to saturating by the variables in $S$, followed by intersecting with $K[x_i:i\in T]$.
\end{remark}

\begin{proof}

  Fix $\alpha_i$ for $i\in T$. Then \[B(\alpha,T):=\#\left\{\beta : |\beta|\leq D\text{ and }\beta_i\leq \alpha_{i}\text{ for }i\in T\right\}\geq C\prod_{i\in T} (\alpha_i+1) D^{s}.\]
  To show this inequality, we will use the average value of $\sum_{i\notin T}\beta_i$, and particularly the fact that it is less than $D/2$.
  
  \begin{align*}
    B(\alpha,T)&=\sum_{\substack{\gamma\in \bbZ_{\geq 0}^{T}\\ \gamma\leq\alpha}}\#\left\{\beta\in\intvecs : |\beta|\leq D\text{ s.t. } \beta_i=\gamma_i\text{ for } i\in T\right\}\\
    &=\sum_{\substack{\gamma\in \bbZ_{\geq 0}^{T}\\ \gamma\leq\alpha}} \binom{D-\left|\gamma\right|+s}{s}.\\
    \intertext{Now we restrict the summation to those $\gamma$ where $\left|\gamma\right|\leq {D}/{2}$ yielding the following inequality:}
    B(\alpha,T)&\geq \sum_{\substack{\gamma\in \bbZ_{\geq 0}^{T}\\ \gamma\leq\alpha,\, 2\left|\gamma\right|<D}} \binom{D-\left|\gamma\right|+s}{s}\\
    &\geq \sum_{\substack{\gamma\in \bbZ_{\geq 0}^{T}\\ \gamma\leq\alpha,\, 2\left|\gamma\right|<D}} \binom{D/2+s}{s}\\
    &\geq \#\left\{\gamma\in \bbZ_{\geq 0}^{T}\middle| \gamma\leq\alpha,\, 2\left|\gamma\right|<D\right\} \binom{D/2+s}{s}\\
    &\geq \#\left\{\gamma\in \bbZ_{\geq 0}^{T}\middle| \gamma\leq\alpha/2\right\} \binom{D/2+s}{s}\\
    &\geq \frac{1}{2^{|T|}}\prod_{i\in T} (\alpha_i+1) \binom{D/2+s}{s}\\
    &\geq C\prod_{i\in T} (\alpha_i+1) D^{s}.
  \end{align*}

  Now we can proceed to prove the upper bound. The proof proceeds similarly to Proposition~\ref{prop:hyperbola}. On the one hand, when the product of the $\alpha_i$'s is too large, we have
  
\begin{align}\label{eq:expected-min-gens-projected}
  \expect{\#\{x^{\alpha}\in G(\rmi|_{T}):\prod (\alpha_i+1)>g_s\}}
  &\le\sum_{\substack{\alpha\in\bbZ_{\geq 0}^{T}\\\prod (\alpha_i+1)>g_s,\, |\alpha|\le D}}(1-q^{(D-|\alpha|)^s})q^{B(\alpha,T)}\\ \nonumber
&\le q^{CD^sg_s}\sum_{\substack{\alpha\in\bbZ_{\geq 0}^{T}\\\prod (\alpha_i+1)>g_s,\,|\alpha|\le D}}1\\ \nonumber
&\le C_1q^{CD^sg_s} D^{t}\\ \nonumber
&=C_1p(1-p)^{CD^sg_s} D^{t}\\ \nonumber
&\approx C_1pD^{t}\exp{(-CpD^{s}g_s)}\to 0.
\end{align}
On the other hand,
\begin{align*}
  \expect{\#\{x^{\alpha}\in G(\rmi|_{T}):\prod (\alpha_i+1)<f_s\}}
  &\le\sum_{\substack{\alpha\in\bbZ_{\geq 0}^{T}\\\prod(\alpha_i+1)<f_s}}(1-q^{(D-|\alpha|)^s})q^{B(\alpha,T)}\\
  &\le (1-q^{D^s})\sum_{\substack{\alpha\in\bbZ_{\geq 0}^{T}\\\prod(\alpha_i+1)<f_s}}1.\\
  \intertext{Since $(1+x)^{r}\geq 1+xr$ for $x\geq -1$ and $r\geq 1$, using $x=-p$ and $r=D^s$ we get the following inequality:}
  \expect{\#\{x^{\alpha}\in G(\rmi|_{T}):\prod (\alpha_i+1)<f_s\}}&\le pD^{s}\left(\frac{f_s(\log f_s)^{t-1}}{(t-1)!}+\bigoh{f_s(\log f_s)^{t-2}}\right),
\end{align*}
which goes to zero whenever $f_s(\log f_s)^{t-1}\ll D^{-s}/p$. For the second statement, we have

\begin{align*}
\expect{\#\{x^{\alpha} \not \in \rmi|_T :  \prod (\alpha_i+1)>h_s  \mbox{  and  } |\alpha| \le D \}}&=\sum_{\substack{\alpha\in\bbZ_{\geq 0}^T\\|\alpha|\le D,\, \prod (\alpha_i+1)>h_s}}q^{B(\alpha,T)}.
\end{align*}
An estimate similar to the one in Equation \ref{eq:expected-min-gens-projected} shows that this goes to zero whenever  $D^{t}\exp(-CpD^{s}h_s)$ does.
\end{proof}

Since the degree, in any dimension, can be bounded by the number of admissible pairs with support of a particular size, the previous result leads to bounds on the degree of $\rmi\sim\umodel$.
\begin{thm}
  \label{thm:general-deg-thm}
    Fix $s$, $0\leq s\leq n$. For $\rmi\sim\umodel$, if $p=p(D)$ is any function such that $\lim_{D\rightarrow \infty} \prob{\dim(\rmi)=s}> 0$, then
    \[
    \prob{\binom{n}{s} Z(n-s,f_s)<\deg(\rmi)< \binom{n}{s} Z(n-s,h_s) \,\middle|\, \dim(\rmi)=s}\to 1
    \]
   as $D\to\infty$, where $f_s(D)$, $h_s(D)$ are any functions satisfying the hypotheses of Theorem~\ref{thm:projected-hyperbola}.

\end{thm}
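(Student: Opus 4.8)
The plan is to express the degree as a sum, over $(n-s)$-element subsets $T\subseteq[n]$, of the number of standard monomials of the contracted ideal $\rmi|_T$, and then to control each summand with Theorem~\ref{thm:projected-hyperbola}. The key structural step is the identity
\[
\deg(\rmi)\;=\;\sum_{\substack{T\subseteq[n]\\ |T|=n-s}}\#\{\text{standard monomials of }\rmi|_T\}\qquad\text{whenever }\dim\rmi=s .
\]
Three observations give this. First, $\deg(\rmi)$ counts the standard pairs $(x^\alpha,S)$ with $|S|=\dim\rmi=s$. Second, for such top-dimensional pairs ``admissible'' already forces ``standard'': if $(x^\beta,S')\le(x^\alpha,S)$ then $S\subseteq S'$ and $\supp(x^{\alpha-\beta})\subseteq S'$, but the second coordinate of any admissible pair has size at most $\dim\rmi$, so $S=S'$, and since $\supp(x^\alpha)\cap S=\varnothing$ we must have $\alpha=\beta$; thus $(x^\alpha,S)$ is minimal among admissible pairs. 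Third, for fixed $S$ the admissible pairs $(x^\alpha,S)$ — which have $\supp(x^\alpha)\subseteq S^{C}$ — correspond bijectively to the standard monomials of $\rmi|_{S^{C}}$: admissibility of $(x^\alpha,S)$ says exactly that no generator of $\rmi$, after the substitution $x_i\mapsto 1$ for $i\in S$, divides $x^\alpha$, i.e.\ $x^\alpha\notin\rmi|_{S^{C}}$. Setting $T=S^{C}$ yields the identity, and it remains to show that, conditionally on $\dim\rmi=s$, a.a.s.\ $Z(n-s,f_s)<\#\{\text{std.\ mon.\ of }\rmi|_T\}<Z(n-s,h_s)$ for \emph{every} such $T$; summing over the $\binom ns$ choices of $T$ then proves the theorem.

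Because $\lim_{D}\prob{\dim\rmi=s}>0$, any event of probability tending to $1$ also has conditional probability tending to $1$ given $\{\dim\rmi=s\}$, so it suffices to produce an \emph{unconditional} a.a.s.\ event $E$ on which $\dim\rmi=s$ forces the stated bounds. I apply Theorem~\ref{thm:projected-hyperbola} to each of the finitely many subsets $T$ with $|T|=n-s$, using the admissible choice $g_s:=h_s$ (admissible since $pD^{t}e^{-CpD^{s}h_s}\le D^{t}e^{-CpD^{s}h_s}\to 0$), to obtain that a.a.s., for every such $T$: \textup{(i)} every $x^\gamma\in G(\rmi|_T)$ satisfies $f_s+1<\prod_{i\in T}(\gamma_i+1)<h_s$ (applying the theorem with lower function $f_s+1$, still admissible as $(f_s+1)(\log(f_s+1))^{t-1}\le 2^{t}f_s(\log f_s)^{t-1}$ for $D$ large); and \textup{(ii)} every monomial $x^\alpha$ of $K[x_i:i\in T]$ with $\alpha_i\le D$ for all $i\in T$ and $\prod_{i\in T}(\alpha_i+1)>h_s$ lies in $\rmi|_T$. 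Statement \textup{(ii)} is a mild strengthening of Theorem~\ref{thm:projected-hyperbola}\,(2), whose first-moment proof goes through verbatim on the larger range $\alpha_i\le D$ once one observes that $B(\alpha,T)\ge C\prod_{i\in T}(\alpha_i+1)D^{s}$ still holds there (restrict the sum defining $B(\alpha,T)$ to $\gamma_i\le\min(\alpha_i,\lfloor D/(2t)\rfloor)$ on the $T$-coordinates and to at most $D/2$ of the degree budget on the complementary coordinates). We may assume $f_s\ge 1$, the lower bound being vacuous otherwise.

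Now work on $E\cap\{\dim\rmi=s\}$ and fix $T$ with $|T|=n-s$, $S=T^{C}$. One checks $\dim\rmi|_T\le 0$: a positive-dimensional standard pair $(x^\beta,U)$ of $\rmi|_T$ with $U\subseteq T$ would lift to an admissible pair $(x^\beta,U\cup S)$ of $\rmi$ of dimension $\ge s+1$, contradicting $\dim\rmi=s$. By \textup{(i)} and $f_s\ge 1$, $\rmi|_T$ is not the unit ideal (otherwise $1\in G(\rmi|_T)$, whose exponent vector gives $\prod_{i\in T}(0+1)=1\not>f_s$), so $\rmi|_T$ is a proper zero-dimensional ideal; being generated in degrees $\le D$ it contains $x_i^{a_i}$ with $a_i\le D$ for each $i\in T$, hence every standard monomial of $\rmi|_T$ has all coordinates $\le D-1$. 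By \textup{(i)}, every $x^\alpha$ with $\prod_{i\in T}(\alpha_i+1)\le f_s+1$ is divisible by no element of $G(\rmi|_T)$, hence is standard, so $\#\{\text{std.\ mon.\ of }\rmi|_T\}\ge Z(n-s,f_s+1)\ge Z(n-s,f_s)+1$ (the extra lattice point being $(\lfloor f_s\rfloor,0,\dots,0)$); summing over the $\binom ns$ subsets gives the strict lower bound. By \textup{(ii)} together with the coordinate bound, every standard monomial of $\rmi|_T$ has $\prod_{i\in T}(\alpha_i+1)\le h_s$, while any $x^\gamma\in G(\rmi|_T)$ has $\prod_{i\in T}(\gamma_i+1)<h_s$ and is \emph{not} standard; hence $\#\{\text{std.\ mon.\ of }\rmi|_T\}\le Z(n-s,h_s)-1$, and summing gives the strict upper bound.

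I expect the real work to be the commutative-algebra bookkeeping of the first step — in particular the ``admissible equals standard in top dimension'' claim and the bijection with standard monomials of $\rmi|_T$ — together with two points that must be verified rather than assumed: that none of the $\binom ns$ summands degenerates (no $\rmi|_T$ is the unit ideal), which is exactly what condition \textup{(i)} rules out, and that the hypothesis ``$|\alpha|\le D$'' in Theorem~\ref{thm:projected-hyperbola}\,(2) may be weakened to ``$\alpha_i\le D$'', which is what lets the finite staircase of the zero-dimensional ideal $\rmi|_T$ fit underneath $h_s$. The probabilistic content is packaged entirely in Theorem~\ref{thm:projected-hyperbola}.
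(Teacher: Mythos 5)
Your proposal is correct and follows essentially the same route as the paper: decompose $\deg(\rmi)=\sum_{|T|=n-s}\deg(\rmi|_T)$ (equivalently, count top-dimensional standard pairs grouped by their support), use the positive limiting probability of $\{\dim\rmi=s\}$ to reduce conditional statements to unconditional a.a.s.\ events, and bound each summand via Theorem~\ref{thm:projected-hyperbola}. You additionally spell out details the paper leaves implicit (the standard-pair justification of the decomposition, zero-dimensionality and non-triviality of each $\rmi|_T$, and extending the range $|\alpha|\le D$ to $\alpha_i\le D$ in part (2)), which are worthwhile but do not change the argument.
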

\begin{proof}
  Suppose $\dim(\rmi)=s$. Then  \[\deg(\rmi)=\sum_{T\subset [n],\, \left|T\right| = n-s} \deg(\rmi|_{T}).\]
  Since $\lim_{D\rightarrow \infty} \prob{\dim(\rmi)=s}> 0$, for the following events, it suffices to know that the non-conditional probabilities go to $1$. In particular, suppose that $A(\rmi)$ is an event with $\prob{A(\rmi)}\rightarrow 1$. Then consider the following sum:
  \begin{equation}\label{eq:cond-prob-deg}
  \prob{A(\rmi)}=\sum_{k=0}^n\prob{A(\rmi) \mid \dim(\rmi)=k}\prob{\dim(\rmi)=k}.
  \end{equation}
  If $\lim_{D\rightarrow \infty}\prob{\dim(\rmi)=s}>0$, this forces $\prob{A(\rmi) | \dim(\rmi)=s}\rightarrow 1$.

  By Theorem~\ref{thm:projected-hyperbola}, and the same argument as in the zero dimensional case, $\prob{\deg(\rmi|_{T})<Z(n-s,h_s)}\rightarrow 1$. Summing over all projections onto $n-s$ coordinates, this shows $\prob{\deg(\rmi)<\binom{n}{s}Z(n-s, h_s)}\rightarrow 1$.
  Also by Theorem~\ref{thm:projected-hyperbola}, we know that
  \[
  \prob{f_s< \prod_{i\in T}(\alpha_i+1)\mathrm{\ for\ all\ } x^{\alpha}\in G(\rmi|_T)}\to 1,
  \]
  and therefore with probability approaching one, all $x^\alpha$ below this curve correspond to admissible pairs. Asymptotically, this implies $\prob{Z(n-s,f_s)<\deg(\rmi|_{T})}\rightarrow 1$, and therefore \[\prob{\binom{n}{s}Z(n-s,f_s)<\deg(\rmi)}\rightarrow 1.\qedhere\]
\end{proof}

\begin{remark}\label{rmk:deg-is-conditional}
A few words about the conditional nature of the result in Theorem \ref{thm:general-deg-thm}. As seen in Theorem \ref{thm:degree-intro-theorem} in the introduction, particular choices of $p$ can guarantee that only one dimension is observed a.a.s., and thus the conditional probability that appears in the general statement of Theorem \ref{thm:general-deg-thm} can be dispensed with. In one of the most natural settings, the case where $p(D)=D^{-k}$, $\rmi\sim \umodel$ is a.a.s. a particular fixed dimension for any choice of $k$ other than an integer (this is why we restricted $k$ not to be an integer in Theorem \ref{thm:degree-intro-theorem}).

But what happens if we allow $k$ to be integral? This ``boundary case" was worked out in detail by the second author by analyzing the case where $pD^t$ approaches a constant, where $D^{-t}$ is one of the integral thresholds for dimensionality.

\begin{thm}[{\cite[Theorem 3]{DWthesis}}]
	\label{thm:dane-expected-dim} Let $\rmi\sim\rmi(n,D,p)$, let $1\le t\le n$ be an integer and let $c>0$ be a constant.  If $pD^{t}\to c$ as $D\to\infty$, then
	\[
	\lim_{D\to\infty}\expect{\dim \rmi}=t-(1-e^{-c/t!})^{\binom{n}{t}}.
	\]
\end{thm}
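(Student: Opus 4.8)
The plan is to reduce $\expect{\dim\rmi}$ to counting ``free'' coordinate subsets and then to show that those count-events are asymptotically independent. Throughout, recall that for a monomial ideal $I$ one has $\dim(R/I)=\max\{\,|T|: T\subseteq[n],\ \text{no minimal generator of }I\text{ is supported inside }T\,\}$. For $T\subseteq[n]$ with $|T|=r$, let $X_T$ be the event that no generator of $\rmi$ lies in $K[x_i:i\in T]$; since there are $N_r:=\binom{D+r}{r}-1$ monomials of positive degree at most $D$ in those variables, $\prob{X_T}=(1-p)^{N_r}$. Because $\binom{D+r}{r}=\tfrac{D^r}{r!}(1+\bigoh{1/D})$ and $pD^{t}\to c$, we get $p\to 0$, $pN_{t}\to c/t!$, $pN_{t-1}\to 0$, and $pN_{t+1}\to\infty$; these three limits drive the entire argument.

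First I would pin $\dim\rmi$ to two values. From $pN_{t+1}\to\infty$ we get $(1-p)^{N_{t+1}}\le e^{-pN_{t+1}}\to 0$, so a union bound over the at most $\binom{n}{t+1}$ subsets of size $t+1$ (vacuous when $t=n$) shows that a.a.s.\ every such subset — hence every subset of size exceeding $t$, since a subset of a free set is free — contains a generator; thus a.a.s.\ $\dim\rmi\le t$. On the other hand, fixing a single $(t-1)$-subset, $(1-p)^{N_{t-1}}\ge 1-pN_{t-1}\to 1$, so a.a.s.\ $\dim\rmi\ge t-1$ (for $t=1$ this is the trivial bound $\dim\rmi\ge 0$). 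Hence a.a.s.\ $\dim\rmi\in\{t-1,t\}$, and since $\dim\rmi\le n$ always, $\expect{\dim\rmi}=(t-1)\prob{\dim\rmi=t-1}+t\prob{\dim\rmi=t}+o(1)=t-\prob{\dim\rmi<t}+o(1)$, using also $\prob{\dim\rmi<t-1}\to 0$. It then remains to identify $\lim\prob{\dim\rmi<t}$.

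Because a.a.s.\ no subset of size exceeding $t$ is free, $\prob{\dim\rmi<t}=\prob{\bigcap_{|T|=t}\overline{X_T}}+o(1)$, so it suffices to evaluate, for each family $\mathcal S$ of distinct $t$-subsets, the limit of $\prob{\bigcap_{T\in\mathcal S}X_T}$ and apply inclusion--exclusion over the finite collection of subsets of $\binom{[n]}{t}$. The joint event says no generator is supported inside any $T\in\mathcal S$, so its probability is $(1-p)^{M}$, where $M$ is the number of monomials of positive degree at most $D$ supported inside some $T\in\mathcal S$. Any two distinct $t$-subsets meet in at most $t-1$ indices, so Bonferroni gives $|\mathcal S|N_{t}-\binom{|\mathcal S|}{2}N_{t-1}\le M\le|\mathcal S|N_{t}$, whence $(1-p)^{|\mathcal S|N_{t}}\le\prob{\bigcap_{T\in\mathcal S}X_T}\le(1-p)^{|\mathcal S|N_{t}}\cdot(1-p)^{-\binom{|\mathcal S|}{2}N_{t-1}}$. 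Here $(1-p)^{|\mathcal S|N_t}=\bigl((1-p)^{N_t}\bigr)^{|\mathcal S|}$ with $(1-p)^{N_t}=\exp(N_t\log(1-p))=\exp(-pN_t(1+\bigoh{p}))\to e^{-c/t!}$, while the correction factor equals $\exp(\binom{|\mathcal S|}{2}pN_{t-1}(1+\bigoh{p}))\to 1$. Therefore $\prob{\bigcap_{T\in\mathcal S}X_T}\to(e^{-c/t!})^{|\mathcal S|}$, and inclusion--exclusion together with the binomial theorem give $\lim\prob{\dim\rmi<t}=\sum_{\mathcal S\subseteq\binom{[n]}{t}}(-1)^{|\mathcal S|}(e^{-c/t!})^{|\mathcal S|}=(1-e^{-c/t!})^{\binom{n}{t}}$. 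Substituting into $\expect{\dim\rmi}=t-\prob{\dim\rmi<t}+o(1)$ finishes the proof.

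The routine parts are the monomial count and the elementary limits $pN_r\to$ the appropriate value. The one real obstacle is the last step: showing that the events $X_T$ with $|T|=t$ behave as if independent. What makes this go through is precisely that two distinct $t$-subsets share only $N_{t-1}=\bigoh{D^{t-1}}$ monomials, negligible against the $\Theta(D^{t})$ monomials each of them constrains, so every overlap term contributes a factor $(1-p)^{\bigoh{D^{t-1}}}=\exp(\bigoh{pD^{t-1}})=\exp(\bigoh{1/D})\to 1$; once that is observed, the inclusion--exclusion is purely formal.
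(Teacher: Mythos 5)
Your argument is correct: the coordinate-subspace characterization of $\dim(R/\rmi)$, the limits $pN_{t-1}\to 0$, $pN_t\to c/t!$, $pN_{t+1}\to\infty$, the resulting two-point concentration on $\{t-1,t\}$, and the Bonferroni/inclusion--exclusion step showing the $\binom{n}{t}$ events $X_T$ are asymptotically independent all check out, and they yield exactly $t-(1-e^{-c/t!})^{\binom{n}{t}}$. Note that this paper does not reprove the statement---it is quoted from \cite[Theorem 3]{DWthesis}---but your derivation is the natural one implicit in that source (it is the same mechanism behind the dimension thresholds of \cite{rmi}), so there is nothing to flag beyond the cosmetic point that $\{\dim\rmi<t\}$ equals $\bigcap_{|T|=t}\overline{X_T}$ exactly, making your $o(1)$ there unnecessary.
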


	If we consider a particular regime $\rmi(n,D,p=D^{-k})$ with $k$ integral, then there is not a single dimension a.a.s., but it is a boundary case involving \textit{exactly two} dimensions that appear with nonzero probability, distributed as in \ref{thm:dane-expected-dim}. Combining that Theorem with Theorem \ref{thm:general-deg-thm} allows an explicit evaluation of Equation \ref{eq:cond-prob-deg}, which will have exactly two nonzero summands, and provides the ``boundary case" version of Theorem \ref{thm:general-deg-thm} with no conditional probabilities in the final statement.
\end{remark}

In the next section, we take an alternative approach to remove conditioning on dimension, by using standard pair enumerations that are robust across dimension.

\section{Standard pairs}\label{sec:standard-pairs}

To demonstrate the well-behaved nature of standard pair invariants (as opposed to the degree invariant), consider several experimental samples from the $\rmi(3,D,p=D^{-2})$ regime, which is chosen so that $\rmi$ sampled from this distribution will have non-negligible positive probability of being either 1 or 2 dimensional, a.a.s. A small value of $D=65$ is enough to display the relevant behavior.

\begin{table}[h]
\begin{center}
	\begin{tabular*}{0.92\textwidth}{ccrrrr}
	$\rmi\sim\rmi(3,65,p=1/4225)$ &  $\dim \rmi$ & $\deg \rmi$ & $sp_0$ & $sp_1$ & $sp_2$\\
	\toprule
	{\footnotesize $ \langle x_1^{8} x_2^{35} x_3^{5},x_1^{8} x_2^{25} x_3^{11},x_1^{18} x_2^{16} x_3^{16},x_1 x_2^{29} x_3^{31},x_1^{5} x_2^{14} x_3^{40},x_1^{2} x_2^{19} x_3^{40}  
 \rangle $ }
	&  2 &  20 &  2781 &  441 &  20\\
	\midrule
	{\footnotesize$\langle x_1^{33} x_2^{23},x_1^{40} x_2 x_3,x_1^6 x_2^{49} x_3^4,x_1^{21} x_2^6 x_3^5,x_1^{19} x_2^3 x_3^{28},x_1^{11} x_2^{16} x_3^{28},x_1^{13} x_2^2 x_3^{36} \rangle$}
	& 2 & 7 & 14348 & 427 & 7\\
	\midrule
	{\footnotesize $\langle x_1 x_2^{45} x_3,x_1 x_2^{21} x_3^{4},x_1^{14} x_2^{6} x_3^{6},x_1^{38} x_2^{4} x_3^{17},x_1^{2} x_3^{37},x_2^{25} x_3^{39},x_3^{52}  
 \rangle $}
	&  2 &  1 &  8165 &  361 &  1\\
	\midrule
	{\footnotesize$\langle x_1^{50} x_2^{14},x_1^{7} x_2^{41},x_1^{51} x_2^{2} x_3^{4},x_1^{10} x_2^{24} x_3^{4},x_1^{6} x_3^{8},x_2^{27} x_3^{8},x_1^{3} x_2^{14} x_3^{16},x_2^{25} x_3^{40}\rangle$}
	& 1 & 237 & 9184 & 237 & 0\\
	\midrule
	{\footnotesize $ \langle x_1^{12} x_2^{52},x_1^{4} x_2^{16} x_3^{3},x_1^{54} x_2^{6} x_3^{4},x_1^{40} x_2^{11} x_3^{7},x_1^{4} x_3^{10},x_2 x_3^{39}  
 \rangle $ }
	&  1 &  392 &  2790 &  392 &  0\\
	\midrule
	{\footnotesize $ \langle x_1^{30} x_2^{5},x_1^{28} x_2^{22} x_3,x_1^{18} x_2^{22} x_3^{8},x_1^{36} x_2^{3} x_3^{9},x_1^{6} x_2^{31} x_3^{9},x_2^{4} x_3^{13},x_1 x_3^{54}  
 \rangle $ }
	&  1 &  452 &  4181 &  452 &  0\\
	\bottomrule
\end{tabular*}
\caption{A collection of randomly generated ideals showing the sensitivity of the degree at the dimension boundary as compared to the number of standard pairs. Here, $sp_i$ is the number $i$-dimensional standard pairs.}
\label{tab:sp-counts}
\end{center}
\end{table}

Because these parameters were chosen to be a boundary case, we see samples containing both one- and two-dimensional ideals. The degrees of the dimension one versus dimension two ideals are dramatically different. This small example shows that the degree invariant is not well behaved when considering multiply-dimensional sets of ideals. This explains why the asymptotic statements in Section 3 required conditioning on dimensions (see Remark \ref{rmk:deg-is-conditional}).

On the other hand, this small glimpse at the data illustrates that the enumeration of standard pairs, whether counted collectively or even dimension-by-dimension, behaves predictably across dimension borders. In other words, the count of standard pairs of a particular dimension is uncorrelated with the true dimension of the ideal. This is obviously false for degree since the definition of degree relies on counting \textit{only} the standard pairs in a particular dimension. Table \ref{tab:sp-counts} also demonstrates a useful fact we'll use in Section \ref{sec:standard-pairs}, which is that the standard pair count is always zero for standard pairs of dimension greater than the ideal of the dimension. This observation will be important to the hypotheses on Lemma \ref{lem:sp-technical} and Theorem \ref{thm:sp-count-whp}. (See Remark \ref{rmk:dim-and-sps}, preceding those results.)

Our next goal in this section is to prove that for any choices of parameters in $\umodel$, and any choice of a subset $S$ of the variables of the ring, there is a region of lattice points guaranteed, asymptotically almost surely, to be standard standard pairs of the form $(x^\alpha,S)$.

\begin{thm}
  \label{thm:std-pair-whp}
  Let $\rmi\sim\umodel$ with $D\to\infty$. Fix $S\subseteq [n]$, with $s:=|S|$, $T:=[n]\setminus S$, and $t:=|T|=n-s$. Let $p=p(D)\rightarrow 0$ and let functions $f_s, h_{s+1}\rightarrow \infty$ and $f_s\leq D$ be as in Remark~\ref{remark:hyperbolas}.
  Then
  \begin{equation*}
    \prob{\text{for all }\alpha\in L(f_s,h_{s+1}),\, (x^{\alpha},S)\text{ is a standard pair for }\rmi} \rightarrow 1,
  \end{equation*}
  where 
  $\displaystyle L(f_s,h_s):=\left\{\alpha\in \bbZ_{\geq 0}^{T}: \prod_{i\in T} (\alpha_i+1) < f_s \text{ and } (\alpha_i+1)^{t-1}>h_{s+1} \right\}$. 
  
\end{thm}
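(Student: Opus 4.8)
The plan is to show two things for each $\alpha \in L(f_s, h_{s+1})$: first, that $(x^\alpha, S)$ is an admissible pair of $\rmi$ a.a.s.\ (uniformly over all such $\alpha$), and second, that it is in fact \emph{minimal} among admissible pairs, i.e.\ a standard pair. For admissibility, recall that $(x^\alpha, S)$ is admissible precisely when every monomial in $x^\alpha \cdot K[S]$ is standard for $\rmi$; equivalently, no generator of $\rmi$ divides any such monomial. Working in $\rmi|_T$ (the ideal obtained by setting $x_i \mapsto 1$ for $i \in S = T^C$), this is the statement that $x^\alpha$, viewed as a monomial in $K[x_i : i \in T]$, is \emph{not} in $\rmi|_T$. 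The condition $\prod_{i \in T}(\alpha_i + 1) < f_s$ places $\alpha$ strictly below the lower hyperboloid of Theorem~\ref{thm:projected-hyperbola}, so by the first part of that theorem (together with the monotonicity observation that every lattice point coordinatewise below a minimal generator is standard), a.a.s.\ all such $x^\alpha$ are standard monomials of $\rmi|_T$ simultaneously. This handles admissibility.

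The second and more delicate step is minimality. Given admissibility, $(x^\alpha, S)$ fails to be a standard pair only if it is dominated by a strictly larger admissible pair $(x^\beta, U)$ with $x^\beta \mid x^\alpha$ and $\supp(x^{\alpha - \beta}) \cup U \subseteq S$. Because $\supp(x^\alpha) \subseteq T$ is disjoint from $S$, any such $\beta$ must agree with $\alpha$ on the $T$-coordinates (dividing $x^\alpha$ forces $\beta_i \le \alpha_i$ for $i \in T$, while $\supp(x^{\alpha-\beta}) \subseteq S$ forces $\beta_i \ge \alpha_i$ for $i \in T$), so in fact $x^\beta = x^\alpha$ as a monomial in the $T$-variables and $U \supsetneq S$. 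Thus minimality fails exactly when, for some variable $x_j$ with $j \in S$, the pair $(x^\alpha, S \cup \{j\})$ is admissible — i.e.\ every monomial in $x^\alpha \cdot x_j^{\mathbb{Z}_{\ge 0}} \cdot K[S]$ is standard. To rule this out a.a.s., I would show that for each fixed $j \in S$, a.a.s.\ there \emph{is} a generator of $\rmi$ dividing some monomial $x^\alpha x_j^m$ with the $S$-exponents allowed to be free. Concretely, $(x^\alpha, S \cup \{j\})$ being admissible means $x^\alpha$ is standard in $\rmi|_{T \cup \{j\}}$, a monomial in $t+1$ variables; the role of the condition $(\alpha_i + 1)^{t-1} > h_{s+1}$ — which forces $\prod_{i \in T}(\alpha_i + 1) \ge \big(\min_i(\alpha_i+1)\big)^{t} > h_{s+1}^{t/(t-1)} \gg h_{s+1}$ — is precisely to push $\alpha$ (for any choice of the $(s+1)$-st free coordinate) strictly \emph{above} the upper hyperboloid $h_{s+1}$ of Theorem~\ref{thm:projected-hyperbola} applied with the set $T \cup \{j\}$ of size $t' = t+1$ and $s' = s - 1$. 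By part (2) of that theorem, a.a.s.\ every such $x^\alpha$ lies in $\rmi|_{T \cup \{j\}}$, so $(x^\alpha, S \cup \{j\})$ is \emph{not} admissible. Taking a union bound over the finitely many $j \in S$ completes the argument.

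The remaining bookkeeping is: (i) check that the product-versus-min inequality really does certify that the relevant $\alpha$, \emph{after adjoining any single free coordinate up to degree $D$}, clears the $h_{s+1}$ hyperboloid — here one should be a little careful that adjoining a large exponent in $x_j$ does not keep the point below the curve, but since $\prod_{i \in T \cup \{j\}}(\alpha_i + 1) \ge \prod_{i \in T}(\alpha_i+1) > h_{s+1}$ already, this is immediate; and (ii) verify the degree constraint $|\alpha| \le D$ needed to invoke Theorem~\ref{thm:projected-hyperbola}(2), which follows from $\prod_{i\in T}(\alpha_i+1) < f_s \le D$ together with the AM–GM-type bound relating $|\alpha|$ to $\prod(\alpha_i+1)$ when all $\alpha_i$ are bounded. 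I expect the main obstacle to be step (ii) of the second paragraph: correctly translating the order-theoretic definition of "standard pair" into the clean statement "$(x^\alpha, S)$ is a standard pair iff $x^\alpha$ is standard in $\rmi|_T$ but $x^\alpha$ is \emph{not} standard in $\rmi|_{T \cup \{j\}}$ for every $j \in S$," and making sure the two hyperboloid conditions defining $L(f_s, h_{s+1})$ line up exactly with these two requirements (the $f_s$ condition feeding Theorem~\ref{thm:projected-hyperbola}(1), the $h_{s+1}$ condition feeding a size-$(t+1)$ instance of Theorem~\ref{thm:projected-hyperbola}(2)). Everything else is a union bound over finitely many subsets and an appeal to results already proved.
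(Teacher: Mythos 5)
Your Part 1 (admissibility of $(x^\alpha,S)$ from the condition $\prod_{i\in T}(\alpha_i+1)<f_s$ via Theorem~\ref{thm:projected-hyperbola}(1)) is correct and is exactly what the paper does. The genuine gap is in your minimality step, and it starts with a mis-remembered partial order: the comparison is $(x^\beta,U)\le(x^\alpha,S)$ iff $x^\beta\mid x^\alpha$ and $\supp(x^{\alpha-\beta})\cup S\subseteq U$, not $\supp(x^{\alpha-\beta})\cup U\subseteq S$. With the correct order, a strictly smaller admissible pair need \emph{not} agree with $x^\alpha$ on the $T$-coordinates; the pairs one must rule out are $(x^{\alpha}|_{T\setminus\{i\}},\,S\cup\{i\})$ for $i\in T$ -- the adjoined free variable comes from $T$ (not from $S$, where $S\cup\{j\}=S$ is vacuous), and the monomial is truncated in that coordinate. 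Admissibility of such a pair is equivalent to $x^{\alpha}|_{T\setminus\{i\}}\notin \rmi|_{T\setminus\{i\}}$, i.e.\ standardness in a restriction to $t-1$ variables with complement of size $s+1$; this is precisely why the relevant function carries the subscript $s+1$ and why the defining condition $(\alpha_i+1)^{t-1}>h_{s+1}$ is tailored so that the product over any $t-1$ of the coordinates exceeds $h_{s+1}$, allowing Theorem~\ref{thm:projected-hyperbola}(2) with $T'=T\setminus\{i\}$ to kill these pairs.

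Your execution goes the opposite way: you translate admissibility of the enlarged pair as ``$x^\alpha$ is standard in $\rmi|_{T\cup\{j\}}$'' and invoke Theorem~\ref{thm:projected-hyperbola}(2) for a set of size $t+1$ (complement $s-1$) with the function $h_{s+1}$. That invocation is invalid: by Remark~\ref{remark:hyperbolas}, a complement of size $s-1$ requires $D^{t+1}\exp(-CpD^{s-1}h)\to 0$, i.e.\ a function on the order of $h_{s-1}$, which for $p=D^{-k}$ is about $D^{2}$ larger than $h_{s+1}$, so the hypothesis fails. Worse, the event you would conclude -- $x^\alpha\in\rmi|_{T\cup\{j\}}$ with $j\in S$ -- implies $x^\alpha\in\rmi|_T$ and thus contradicts the admissibility of $(x^\alpha,S)$ you established in Part 1; it says nothing about minimality. (Your informal description ``a generator dividing some monomial $x^\alpha x_j^m$ with $S$-exponents free'' is the right geometric picture only if $j\in T$, and then the correct formalization is failure of admissibility of $(x^{\alpha}|_{T\setminus\{j\}},S\cup\{j\})$, handled as above.) Fixing Part 2 along these lines recovers the paper's proof; the remaining bookkeeping you list, such as $|\alpha|<D$ from $\prod_{i\in T}(\alpha_i+1)<f_s\le D$, is fine.
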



\begin{figure}[h]
	\centering
	\captionsetup{width=0.9\linewidth}
    \includegraphics[width=0.6\linewidth]{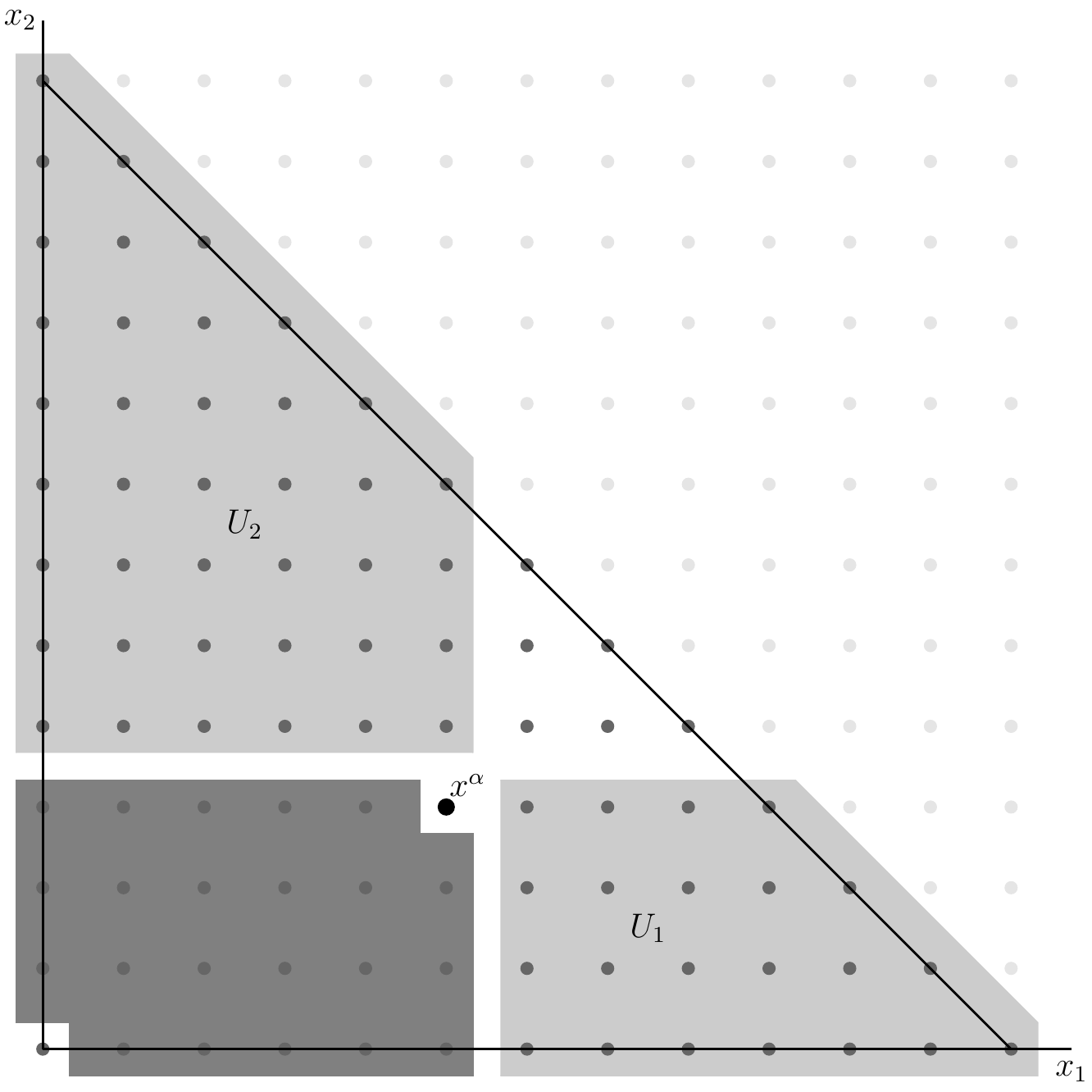}
  \caption{
  	For $n=2$, $D=12$, and $x^\alpha=x_1^5x_2^3$, this figure illustrates the conditions under which $(x^\alpha,\emptyset)$ is a standard pair of $\rmi\sim\umodel$. For $x^\alpha$ to be a standard monomial, no monomial in the dark gray region can be a generator of $\rmi$. Additionally, to insure $x^\alpha$ does not belong to a higher-dimensional standard pair, at least one monomial from each of the light gray regions, $U_1$ and $U_2$, must be a generator of $\rmi$.
  	In the language of Equation \ref{eq:standard-pair-geometry}, at least one generator chosen in region $U_1$ guarantees that $(x_2^3,\{x_1\})$ is not an admissible pair for $\rmi$ (note $x_2^3=x^\alpha|_{x_2}$). Similarly, a generator chosen in region $U_2$ guarantees that $(x_1^5,\{x_2\})$ is not an admissible pair for $\rmi$.}
  \label{fig:trapezoid}
\end{figure}

\begin{proof}
For convenience, we will write $f=f_s$ and $h=h_{s+1}$. Then the statement \[\prob{ \text{for all }\alpha\in L(f,h), (x^{\alpha},S)\text{ is a standard pair for }\rmi} \rightarrow 1,\] is equivalent to the finite set of statements
  \[\prob{\text{for all }\alpha\in L(f,h),\, (x^{\alpha},S)\text{ is an admissible pair for }\rmi} \rightarrow 1,\]
  and for all $i\in T$
  \begin{equation}\prob{\text{for all }\alpha\in L(f,h),\, (x^{\alpha}|_{T\setminus \left\{i\right\}},S\cup \left\{i\right\})\text{ is not an admissible pair for }\rmi} \rightarrow 1.
  \label{eq:standard-pair-geometry}
  \end{equation}
  For the geometric intuition underlying the statements of Equation \ref{eq:standard-pair-geometry}, see Figure \ref{fig:trapezoid}.
  
  \noindent \emph{Part 1:} $\prob{\text{for all }\alpha\in L(f,h),\, (x^{\alpha},S)\text{ is an admissible pair for }\rmi} \rightarrow 1$

  Note $f$ satisfies the same conditions as it would in Theorem~\ref{thm:projected-hyperbola}. As a consequence of that theorem, we have that
  \[\prob{\text{for all }\alpha\in \bbZ^{T}\text{ with }\prod_{j\in T}(\alpha_j+1)<f, (x^{\alpha},S)\text{ is an admissible pair for }\rmi}\rightarrow 1.\] For all $\alpha\in L(f,h)$, we have $\prod_{j\in T}(\alpha_j+1)<f$, and thus
  \[\prob{\text{for all }\alpha\in L(f,h),\, (x^\alpha,S)\text{ is an admissible pair for }\rmi} \rightarrow 1.\]

  \noindent \emph{Part 2 :}  For fixed $i\in T$, $\prob{\text{for all }\alpha\in L(f,h),\, (x^{\alpha}|_{T\setminus \left\{i\right\}},S\cup \left\{i\right\})\text{ is not an admissible pair }\rmi} \rightarrow 1$

  Notice that for a fixed product, the sum is maximized in the case where the product is most asymmetric, and so since $\prod_{i\in T}(\alpha_i+1)< f\leq D$ and $\alpha_i\geq 0$, we have that $|\alpha|< D$. Now let $T'=T\setminus \left\{i\right\}$ and $S'=S\cup \left\{i\right\}$. Again we apply Theorem~\ref{thm:projected-hyperbola}, noting that $h=h_{s+1}$. The theorem implies that

  \[\prob{\text{for all }\alpha\in \bbZ^{T'}\text{ with }\prod_{j\in T'}(\alpha_j+1)>h,\, (x^{\alpha},S')\text{ is not an admissible pair for }\rmi }\rightarrow 1.\]

  For all $\alpha\in L(f,h)$, we have $\prod_{j\in T'}(\alpha_j+1)>h$, and thus
  
  \[\prob{\text{for all }\alpha\in L(f,h),\, (x^{\alpha}|_{T'},S')\text{ is not an admissible pair for } \rmi} \rightarrow 1.\qedhere\]
\end{proof}

\begin{remark}\label{rmk:dim-and-sps}
  Notice that the case where $|S|> \dim \rmi$ the conditions in Remark~\ref{remark:hyperbolas} requires that $f_s$ must be a decreasing to zero function. This ensures the set $L(f_s,h_{s+1})$ is empty for $D$ sufficiently large, and thus Theorem~\ref{thm:std-pair-whp} is vacuously true in these cases, which matches the fact that in these cases,  the dimension results in \cite{rmi} imply that there are no standard pairs with support $S$.

  The next results bound the size of $L(f_{s},h_{s+1})$ to give probabistic estimates on the number of standard pairs in the case where $|S|\leq \dim \rmi$.
\end{remark}

\begin{lm}
  \label{lem:sp-technical}
  Fix $T\subseteq [n]$, with $T\neq \emptyset$ and let $t=|T|$. Given $f,h$ with $f\rightarrow \infty$ and $f^{t-1}\gg h^{t}$. Then there exists some constant $C\leq 1$ such that
  \[|L(f,h)|=CZ(t,f)+O\left(f(\log(f))^{t-2}\right).\]
\end{lm}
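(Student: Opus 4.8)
The plan is to estimate $|L(f,h)|$ by comparing it to the full count $Z(t,f) = \#\{\alpha\in\mathbb{Z}_{\geq 0}^{T} : \prod_{i\in T}(\alpha_i+1)\leq f\}$ and bounding the discrepancy, which consists exactly of those $\alpha$ with $\prod_{i\in T}(\alpha_i+1)< f$ but $(\alpha_i+1)^{t-1}\leq h$ for \emph{some} $i\in T$. First I would write $L(f,h) = \{\alpha : \prod(\alpha_i+1)<f\}\setminus \bigcup_{i\in T} E_i$, where $E_i = \{\alpha : \prod_{j\in T}(\alpha_j+1)<f \text{ and } (\alpha_i+1)^{t-1}\leq h\}$. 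By inclusion–exclusion (or just a union bound on one side and a single-term bound on the other) it suffices to estimate $|E_i|$ for a fixed $i$, since there are only $t$ such sets and $t$ is a constant. The constant $C\leq 1$ in the statement will come from the fact that some of the $E_i$ genuinely remove a positive proportion of the lattice points when $f^{t-1}$ and $h^t$ are comparable in scale; the hypothesis $f^{t-1}\gg h^t$ is what forces this proportion (and the overlaps among the $E_i$) to contribute only at the level of the error term, so in fact $C$ can be taken to be $1$ asymptotically, but stating $C\leq 1$ avoids fussing over lower-order behavior.

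The key computation is bounding $|E_i|$. Fix $i\in T$; the condition $(\alpha_i+1)^{t-1}\leq h$ means $\alpha_i+1\leq h^{1/(t-1)}$. For each such fixed value $a := \alpha_i+1 \in \{1,\dots,\lfloor h^{1/(t-1)}\rfloor\}$, the remaining coordinates $(\alpha_j+1)_{j\in T\setminus\{i\}}$ range over positive integers with product $< f/a \leq f$, so the number of choices is $Z(t-1, f/a) = O\!\big((f/a)(\log f)^{t-2}\big)$ by Equation~\eqref{eq:number-theory-asymptotic}. Summing over $a$ gives
\[
|E_i| \;=\; \sum_{a=1}^{\lfloor h^{1/(t-1)}\rfloor} Z(t-1, f/a) \;=\; O\!\Big(f(\log f)^{t-2}\sum_{a=1}^{\lfloor h^{1/(t-1)}\rfloor}\frac1a\Big) \;=\; O\!\big(f(\log f)^{t-2}\log h\big).
\]
Now invoke the hypothesis $f^{t-1}\gg h^t$: this gives $\log h \leq \frac{t-1}{t}\log f + o(\log f) = O(\log f)$, so the sum $\sum 1/a$ up to $h^{1/(t-1)}$ is $O(\log h) = O(\log f)$, and hence $|E_i| = O\big(f(\log f)^{t-1}\big)$. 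This is too weak — it is the same order as $Z(t,f)$ itself — so I would instead be more careful: actually, since $t$ is a \emph{constant}, and the main term of $Z(t,f)$ is $\frac{f(\log f)^{t-1}}{(t-1)!}$, I want the $E_i$ to be lower order by a full power of $\log f$. The correct bound comes from noticing that on $E_i$ the coordinate $\alpha_i$ is confined to a range of size $h^{1/(t-1)}$, which, using $h^t\ll f^{t-1}$, i.e. $h^{1/(t-1)} \ll f^{1/t} = f^{o(1)}\cdot f^{1/t}$... hmm, $f^{1/t}$ is not $f^{o(1)}$.

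Let me restate the plan more carefully. The right way: on $E_i$, summing $Z(t-1,f/a)$ over $a\le h^{1/(t-1)}$ and using $Z(t-1,m) = O(m(\log m)^{t-2})$ crudely gives $O(f(\log f)^{t-2}\log(h^{1/(t-1)})) = O(f(\log f)^{t-2}\log h)$. Then the hypothesis $f^{t-1}\gg h^t$ yields $\log h = O(\log f)$ — still only giving $O(f(\log f)^{t-1})$. To get the claimed error $O(f(\log f)^{t-2})$ one needs $\log h = O(1)$, which does \emph{not} follow; so I suspect the intended reading is that the error term should absorb this, or that the hypothesis should be read multiplicatively as $h = f^{o(1)}$-type smallness, or that the statement's $O(f(\log f)^{t-2})$ is really $O(f(\log f)^{t-2}\log h)$ which under $f^{t-1}\gg h^t$ is $o(f(\log f)^{t-1}) = o(Z(t,f))$. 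The cleanest route, and the one I would actually write, is: prove $|L(f,h)| = Z(t,f) - \sum_i |E_i| + (\text{overlaps})$ with $\sum_i|E_i| = O(f(\log f)^{t-2}\log h)$, observe $\log h = o(\log f)$ when $f^{t-1}\gg h^t$ is taken as $h = f^{(t-1)/t - \delta}$ for some $\delta>0$ — no wait, that still gives $\log h \asymp \log f$.

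**Main obstacle.** I expect the genuine difficulty to be exactly this bookkeeping: reconciling the stated error term $O(f(\log f)^{t-2})$ with the hypothesis $f^{t-1}\gg h^t$. My best reading is that the hypothesis is meant in the strong sense that $\log h = o(\log f)$ (equivalently $h = f^{o(1)}$), NOT merely $f^{t-1}/h^t\to\infty$; under that reading the argument above goes through cleanly, giving $\sum_i|E_i| = O(f(\log f)^{t-2}\log h) = o(f(\log f)^{t-2}\log f)$... still not matching. So the honest plan is: (1) establish the exact decomposition $L(f,h) = \{\prod(\alpha_i+1)<f\} \setminus \bigcup E_i$; (2) bound each $|E_i| = \sum_{a\le h^{1/(t-1)}} Z(t-1,f/a)$ using the number-theoretic asymptotic \eqref{eq:number-theory-asymptotic} for $Z(t-1,\cdot)$; (3) absorb the resulting $\sum_a 1/a = O(\log h)$ term, and under the intended smallness of $h$ relative to $f$, conclude that $|\bigcup E_i|$ is of strictly smaller order than the main term $\frac{f(\log f)^{t-1}}{(t-1)!}$, so that $|L(f,h)| = C\cdot Z(t,f) + O(f(\log f)^{t-2})$ for a constant $C\le 1$ (with $C=1$ when $h$ is small enough). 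The one step I would flag as needing care in the write-up is making precise which quantitative relationship between $f$ and $h$ is actually being assumed, and checking that it matches the places Lemma~\ref{lem:sp-technical} is invoked (namely with $f=f_s$, $h=h_{s+1}$, $f_s = D^{k-s-\epsilon}/p$, $h_{s+1} = D^{-s-1+\epsilon}/p$ in the running example), where indeed $f^{t-1}/h^t$ is a fixed power of $D$ and the logarithms are genuinely comparable — suggesting the error term in the lemma's statement is best interpreted up to the $\log h$ factor, i.e. as $o(Z(t,f))$.
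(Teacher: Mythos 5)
Your decomposition $L(f,h)=\{\prod(\alpha_i+1)<f\}\setminus\bigcup_{i}E_i$ and the slice count $|E_i|=\sum_{a\le h^{1/(t-1)}}Z(t-1,f/a)=\Theta\bigl(f(\log f)^{t-2}\log h\bigr)$ are fine, but the plan then runs into a misreading of the statement that you never resolve. You assume the goal is $C=1$, i.e.\ that the hypothesis $f^{t-1}\gg h^{t}$ should force $\bigl|\bigcup_i E_i\bigr|$ into the error term $O(f(\log f)^{t-2})$. It does not, and it is not supposed to: $f^{t-1}\gg h^{t}$ only says $\log f-\tfrac{t}{t-1}\log h\to\infty$, not $\log h=o(\log f)$. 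In the regime where Lemma~\ref{lem:sp-technical} is actually invoked (Theorem~\ref{thm:sp-count-whp}, with $f$ and $h$ both essentially fixed powers of $D$), one has $\log h\asymp\log f$, the slabs $E_i$ remove a constant \emph{fraction} of $Z(t,f)$, and the constant $C\le 1$ in the conclusion is exactly the device that records this shrinkage; e.g.\ for $t=2$, $f=D^{3}$, $h=D$ one gets $|L|\sim\tfrac13 Z(2,f)$, so your claim that ``$C$ can be taken to be $1$ asymptotically'' is false, and your fallback suggestions (reading the hypothesis as $h=f^{o(1)}$, or weakening the error term to $o(Z(t,f))$) change the statement rather than prove it. Your crude bounds only give $Z(t,f)-O(f(\log f)^{t-1})\le|L|\le Z(t,f)$, which neither identifies a constant $C$ nor even shows $|L|\gtrsim cZ(t,f)$ for some $c>0$ — and positivity of the leading constant is precisely what the lower bound in Theorem~\ref{thm:sp-count-whp} needs.

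For comparison, the paper's proof avoids this bookkeeping entirely: it approximates $|L(f,h)|$ by the volume $W_t(c,f)$ of the region $\{x_i\ge c-1,\ \prod(x_i+1)\le f\}$ with $c=h^{1/(t-1)}$, controls the lattice-point discrepancy by Davenport's theorem, and evaluates the volume by the scaling $W_t(c,d)=c^{t}W_t(1,d/c^{t})$ together with \eqref{eq:number-theory-asymptotic}, arriving at the main term $\tfrac{f}{(t-1)!}\bigl(\log f-\tfrac{t}{t-1}\log h\bigr)^{t-1}$; the hypothesis $f^{t-1}\gg h^{t}$ guarantees $d/c^{t}\to\infty$ and that this main term is positive, and the factor $\bigl(1-\tfrac{t}{t-1}\tfrac{\log h}{\log f}\bigr)^{t-1}$ is the constant $C\le 1$ (the case $t=1$ is handled separately). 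Your slicing strategy could in principle be completed — carry out the full inclusion--exclusion over subsets of coordinates with Dirichlet-style summation, keeping the constants, so that the alternating sum reproduces the same $\bigl(\log f-\tfrac{t}{t-1}\log h\bigr)^{t-1}$ coefficient — but as written the proposal stops at a one-sided estimate and an incorrect expectation about $C$, so there is a genuine gap.
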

\begin{proof}
  The case of $t=1$ causes issues with the remainder of the proof we first prove that case. Notice if $t=1$, the requirement $h\ll f^{t-1}$ implies $h\ll 1$ and so in particular, $h$ is eventually strictly less than $1$. That means for $D$ sufficiently large, we can expand to give $L(f,h)=\left\{a\in\bbZ_{\geq 0} | a+1<f\right\}$. Thus $|L(f,h)|=f-1$, thus the statement is true.
  
  For the remainder of this proof, we will use a result by Davenport\cite{davenport1951lipschitz}, which bounds the difference between the number of lattice points in a region and its volume. For this it will be convenient to consider the following volume:
  \[W_{t}(c,d) := Vol\left(\left\{x\in \mathbb{R}_{\geq 0}^t :  \prod (x_i+c)\leq d\right\}\right).\]
  Equivalently, we may define this volume as $W_{t}(c,d)=Vol\left(\left\{x\in\mathbb{R}^t: x_i\geq c-1\text{ and } \prod (x_i+1)\leq d\right\}\right)$. We use the convention that $W_0(c,d)=1$. 

  Applying the main theorem from Davenport~\cite{davenport1951lipschitz}, as $d\rightarrow \infty$, we get
  \[W_t(1,d)=Z(t,d)\pm O\left(\sum_{i=0}^{t-1}W_{i}(1,d)\right).\]

  Then since $W_0(1,d)=1$ and $W_0(1,d)=d$, we have $W_1(1,d)\gg W_0(1,d)$. Recall that $Z(t,d)=O(d(\log d)^{t-1})$, then by inducting on $i$, we have that $W_i(1,d)\gg W_{i-1}(1,d)$. This allows us to simplify the previous equaiton to
  \[W_t(1,d)=Z(t,d)\pm O\left(W_{t-1}(1,d)\right).\]
  
  Again we can apply the theorem by Davenport, this time to $|L(f,h)|$ to yield the following: \[|L(f,h)|=W_t(\sqrt[t-1]{h},f)\pm O\left(W_{t-1}(\sqrt[t-1]{h},f)\right).\]

  Observe that $W_{t}(c,d)=c^tW_{t}(1,d/c^t)$. So as long as $d/c^t\rightarrow \infty$, we can use the asymptotic behavior of $W_t(1,d)$. In particular, we can expand $W_t(c,d)$ as follows
    \begin{align*}
    W_{t}(c,d)&=c^tW_{t}(1,d/c^t)\\
    &=c^tZ(t,d/c^t)\pm O(c^tW_{t-1}(1,d/c^t))\\
    &=c^t\left(\frac{d/c^t\log^{t-1}(d/c^t)}{(t-1)!}+O(d/c^t\log^{t-2}d/c^t)\right)\pm O(c^tW_{t-1}(1,d/c^t))\\
    &=\frac{d\log^{t-1}(d/c^t)}{(t-1)!}+O(d\log^{t-2}d/c^t)\pm O(c^tW_{t-1}(1,d/c^t))\\
    &=\frac{d\log^{t-1}(d/c^t)}{(t-1)!}+O(d\log^{t-2}d/c^t).
  \end{align*}

  In this case, we use $d=f$ and $c=\sqrt[t-1]{h}$, so $d/c^t=\frac{f}{h^{t/t-1}}$ and thus $d/c^t\rightarrow \infty$. This allows us to apply the previous formula to yield the following:

  \begin{align*}
    |L(f,h)|&=W_t(\sqrt[t-1]{h},f)\pm O(W_{t-1}(\sqrt[t-1]{h},f))\\
    &=\frac{f\log^{t-1}\left(\frac{f}{h^{t/t-1}}\right)}{(t-1)!}+O\left(f\log^{t-2}\frac{f}{h^{t/t-1}}\right)\\
    &=\frac{f(\log(f)-(t/t-1)\log(h))^{t-1}}{(t-1)!}+O\left(f(\log(f)-(t/t-1)\log(h))^{t-2}\right)\\
    \intertext{$h\ll f$ so $\log(h)<C\log(f)$}
    &=C\frac{f\log^{t-1}(f)}{(t-1)!}+O\left(f\log^{t-2}(f)\right)\\
    &=CZ(t,f)+O\left(f\log^{t-2}(f)\right).\qedhere
  \end{align*}
\end{proof}

\begin{thm}
  \label{thm:sp-count-whp}
  Fix $S$ a proper subset of the variables $\left\{x_1,\ldots,x_n\right\}$ and let $\rmi \sim\umodel$. Then for $f_s,h_{s}$ as in Remark~\ref{remark:hyperbolas}, a constant $C>0$ and arbitrary $\epsilon>0$, the following hold.
  \begin{enumerate}
  \item If $D^{-s}\ll p \ll 1 $ then
    \[\prob{\#\{\text{standard pairs }(x^\alpha,S)\text{ of } \rmi \}=0}\rightarrow 1.\]
  \item If $D^{-n+\epsilon}\leq p\ll D^{-s}$ then
    \[\prob{CZ(t,f_s)<\#\{\text{standard pairs }(x^\alpha,S)\text{ of } \rmi \}<Z(t,h_{s})}\rightarrow 1.\]
  \end{enumerate}

\end{thm}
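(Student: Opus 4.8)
\textbf{Proof proposal for Theorem~\ref{thm:sp-count-whp}.}

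The plan is to assemble the statement from the two ingredients already in place: Theorem~\ref{thm:std-pair-whp}, which guarantees a whole region $L(f_s,h_{s+1})$ of standard pairs a.a.s., and Lemma~\ref{lem:sp-technical}, which estimates $|L(f,h)|$ in terms of $Z(t,f)$. First I would dispose of part (1). When $p\gg D^{-s}$, the hypotheses of Remark~\ref{remark:hyperbolas} force $f_s\to 0$, since $pD^s f_s(\log f_s)^{t-1}\to 0$ requires $f_s\ll D^{-s}/p \to 0$. By the observation in Remark~\ref{rmk:dim-and-sps}, this is precisely the regime where the dimension results of \cite{rmi} give $\dim\rmi < s$ a.a.s., hence no standard pair can have support of size $s$; alternatively, a direct first-moment argument shows the expected number of admissible pairs $(x^\alpha,S)$ with $|\alpha|\le D$ tends to $0$. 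Either way, $\prob{\#\{(x^\alpha,S)\text{ standard pairs}\}=0}\to 1$.

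For part (2), the lower bound is the easier half. Choosing $f_s$ and $h_{s+1}$ as in Remark~\ref{remark:hyperbolas} (for instance $f_s=D^{-s+\epsilon}/p$, $h_{s+1}=D^{-s-1+\epsilon}/p$), one checks that $p\ll D^{-s}$ makes $f_s\to\infty$ and that the relation $f_s^{t-1}\gg h_{s+1}^{t}$ needed in Lemma~\ref{lem:sp-technical} holds: substituting the explicit forms, $f_s^{t-1}/h_{s+1}^{t} = D^{(-s+\epsilon)(t-1)+ (s+1-\epsilon)t}/p^{-1} $ up to constants, and the exponent of $D$ is positive while $1/p\to\infty$ — this comparison has to be done carefully but comes out in our favor in the stated range $D^{-n+\epsilon}\le p$. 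Theorem~\ref{thm:std-pair-whp} then says that a.a.s.\ every $\alpha\in L(f_s,h_{s+1})$ yields a standard pair $(x^\alpha,S)$, so the number of such standard pairs is a.a.s.\ at least $|L(f_s,h_{s+1})|$, which by Lemma~\ref{lem:sp-technical} equals $C'Z(t,f_s)+O(f_s(\log f_s)^{t-2})$. Absorbing the error term into the constant (it is lower order, since $Z(t,f_s)\sim f_s(\log f_s)^{t-1}/(t-1)!$), we get a.a.s.\ $CZ(t,f_s) < \#\{(x^\alpha,S)\text{ standard pairs}\}$ for a suitable $C>0$; and $h_s$ in the statement is just $h_{s+1}$ relabeled up to the freedom in choosing the bounding functions, or one notes $f_s$ can be taken as any admissible lower-bound function, so the displayed $CZ(t,f_s)$ is what we want.

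The upper bound is where the real work lies, and I expect it to be the main obstacle. Every standard pair $(x^\alpha,S)$ is in particular an admissible pair, so $\supp(x^\alpha)\cap S=\emptyset$, i.e.\ $\alpha\in\bbZ_{\ge0}^T$, and every monomial in $x^\alpha K[S]$ is standard for $\rmi$; applying the substitution $x_i\mapsto 1$ for $i\notin T$, $x^\alpha$ must be a standard monomial of $\rmi|_T$ — equivalently, $x^\alpha\notin \rmi|_T$. By Theorem~\ref{thm:projected-hyperbola}(2) with the choice $h_s$, a.a.s.\ every $x^\alpha\notin\rmi|_T$ with $|\alpha|\le D$ satisfies $\prod_{i\in T}(\alpha_i+1)\le h_s$. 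One still has to rule out $\alpha$ with $|\alpha|>D$, but since $\supp(x^\alpha)\subseteq T$ and the product is at most $h_s\le D$ (in the chosen regime), asymmetry of the product under a fixed-sum constraint forces $|\alpha|\le D$, exactly as in the proof of Theorem~\ref{thm:std-pair-whp}, Part~2. Hence a.a.s.\ the number of standard pairs $(x^\alpha,S)$ is at most $\#\{\alpha\in\bbZ_{\ge0}^T:\prod_{i\in T}(\alpha_i+1)\le h_s\}=Z(t,h_s)$. This gives the upper bound; the delicate points to get right are (i) verifying the chain of Remark~\ref{remark:hyperbolas} constraints is simultaneously satisfiable by $f_s$ and $h_s$ in the stated $p$-range, and (ii) confirming the $|\alpha|\le D$ reduction so that the unbounded lattice points do not contribute. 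Combining the two bounds yields the claimed sandwich, completing the proof.
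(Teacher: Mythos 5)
Your route is the same as the paper's: part (1) from the dimension thresholds of \cite{rmi}, the lower bound in part (2) from Theorem~\ref{thm:std-pair-whp} combined with Lemma~\ref{lem:sp-technical}, and the upper bound from Theorem~\ref{thm:projected-hyperbola} applied to $\rmi|_T$. The one step that fails as written is exactly the one you flag as ``to be done carefully'': verifying $f_s^{t-1}\gg h_{s+1}^{t}$. First, $f_s=D^{-s+\epsilon}/p$ is not an admissible choice, since Remark~\ref{remark:hyperbolas} requires $pD^{s}f_s(\log f_s)^{t-1}\to 0$, which forces $f_s\ll D^{-s}/p$; the sign on $\epsilon$ must be negative. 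Second, even after correcting to $f_s=D^{-s-\epsilon}/p$, pairing it with $h_{s+1}=D^{-s-1+\epsilon}/p$ gives $f_s^{t-1}/h_{s+1}^{t}=pD^{n-(2t-1)\epsilon}$, which at the endpoint $p=D^{-n+\epsilon}$ equals $D^{-2(t-1)\epsilon}$ and does not tend to infinity (it tends to $0$ for $t\ge 2$); with your literal sign-flipped $f_s$ the ratio is $pD^{n-\epsilon}\ge 1$, merely bounded. So reusing the same $\epsilon$ in both exponents never verifies the hypothesis of Lemma~\ref{lem:sp-technical}. The paper's resolution is to decouple the parameters: keep $\epsilon$ only in the hypothesis $p\ge D^{-n+\epsilon}$, assume WLOG $f_s\ge D^{-s-\delta}/p$ (enlarging $f_s$ toward the admissible ceiling only strengthens the claim), set $h_{s+1}=D^{-s-1+\delta}/p$ for a fresh parameter $\delta$, compute $f_s^{t-1}/h_{s+1}^{t}\ge pD^{n-(2t-1)\delta}\ge D^{\epsilon-(2t-1)\delta}$, and then choose $\delta<\epsilon/(2t-1)$. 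This small decoupling is the missing idea, and it is precisely where the hypothesis $p\ge D^{-n+\epsilon}$ enters the lower bound.

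Two smaller points. In the upper bound, your reduction of the case $|\alpha|>D$ is circular as phrased: ``the product is at most $h_s\le D$'' is what you are trying to establish, and $h_s\le D$ need not hold in this regime (for $p$ near $D^{-n+\epsilon}$ the admissibility condition $D^{t}\exp(-CpD^{s}h_s)\to 0$ forces $h_s\gtrsim D^{t-\epsilon}\log D$, which exceeds $D$ when $t\ge 2$). The paper's own proof is terse here too --- it only notes that a standard pair $(x^{\alpha},S)$ forces $x^{\alpha+\beta}\notin\rmi$ for all $\beta\in\bbZ^{S}$ and invokes Theorem~\ref{thm:projected-hyperbola} --- so your version is no worse, but do not present that reduction as settled. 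Finally, the aside that ``$h_s$ in the statement is just $h_{s+1}$ relabeled'' should be dropped: $h_{s+1}$ is purely internal to the lower-bound argument, while the stated upper bound $Z(t,h_s)$ comes directly from Theorem~\ref{thm:projected-hyperbola} with $h_s$ itself.
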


\begin{proof}\, 

  The first case is clear from the dimension bounds in~\cite[Theorem~3.4]{rmi}, so we focus on the second case.

  \emph{Lower Bound:}
  Notice that if $s=n$, the constraints in the second case are vacuous and so the statement is true vacuously. Thus we will restrict to the case where $s<n$. Now applying Theorem~\ref{thm:std-pair-whp} we find that for any choice of $h_{s+1}$ we have \[\prob{\#\{\text{standard pairs }(x^\alpha,S)\text{ in } I\}\geq |L(f_s,h_{s+1})|}\rightarrow 1.\]

  Since choosing a smaller $f_s$ only weakens the condtion inside the probability, and since $p\ll D^{-s}$, WLOG we may assume $f_{s}\geq D^{-s-\delta}/p$ for any choice of $\delta>0$. Choose $h_{s+1}=D^{-s-1+\delta}/p$. Now we compute 

  \begin{align*}
    \frac{f_{s}^{t-1}}{h_{s+1}^{t}}\geq \frac{\left(D^{-s-\delta}/p\right)^{t-1}}{\left(D^{-s-1+\delta}/p\right)^{t}}
    = \frac{pD^{-st+s-(t-1)\delta}}{D^{-st-t+t\delta}}
    = pD^{n-(2t-1)\delta}
  \end{align*}
  Then since $p\geq D^{-n+\epsilon}$, we can simply choose $\delta<\epsilon/(2t-1)$ to allow $\frac{f_{s}^{t-1}}{h_{s+1}^{t}}\rightarrow \infty$. Then by applying Lemma~\ref{lem:sp-technical} to $L(f_s,h_{s+1})$ we get \[\prob{\#\{\text{standard pairs }(x^\alpha,S)\text{ in } I\}\geq CZ(t,f_s)}\rightarrow 1.\]

  \emph{Upper Bound:}
  If $(x^{\alpha},S)$ is a standard pair then for all $\beta\in \bbZ^{S}$, $x^{\alpha+\beta}\notin I$. Applying Theorem~\ref{thm:projected-hyperbola} implies that if $\prod_{i\notin S}\alpha_i\gg D^{k}$ then $\prob{x^{\alpha+\beta}\notin I, \forall \beta\in \bbZ^{S}}\rightarrow 0$. Thus as an upper bound, \[\prob{\#\{\text{standard pairs }(x^\alpha,S)\text{ in } I\}\leq Z(t,h_s)}\rightarrow 1.\qedhere\]
\end{proof}

Since the arithmetic degree for a monomial ideal is given by the number of standard pairs, the results on standard pairs allow us to compute the asymptotic arithmetic degree of a random monomial ideal.

\begin{corollary}
  Let $\rmi\sim\umodel$, and suppose $p=D^{-k}$ for $0<k<n$. Then there exist constants $C_1,C_2>0$ such that asymptotically almost surely as $D\to\infty$,
  \[C_1Z(t,D^{k-\epsilon})<\adeg(\rmi)<C_2Z(t,D^{k+\epsilon}).\]
\end{corollary}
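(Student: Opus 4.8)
The plan is to reduce the statement to Theorem~\ref{thm:sp-count-whp} together with the number-theoretic asymptotic~\eqref{eq:number-theory-asymptotic}. Since the arithmetic degree of a monomial ideal equals its total number of standard pairs, first write
\[
\adeg(\rmi)=\sum_{j=0}^{n}\ \sum_{\substack{S\subseteq[n]\\ |S|=j}}\#\{\text{standard pairs }(x^{\alpha},S)\text{ of }\rmi\},
\]
and control each of the finitely many inner summands. Fix $\epsilon'\in(0,\min\{k,\,n-k\})$. For $j>k$ we have $D^{-j}\ll p=D^{-k}$, so part~(1) of Theorem~\ref{thm:sp-count-whp} says that a.a.s.\ there are no standard pairs indexed by any such $S$. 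For $j<k$ we have $D^{-n+\epsilon'}\le p\ll D^{-j}$, so part~(2) applies with $t=n-j$ and the functions $f_j=D^{k-j-\epsilon'}$, $h_j=D^{k-j+\epsilon'}$ of Remark~\ref{remark:hyperbolas}, giving a.a.s.
\[
C\,Z(n-j,\,D^{k-j-\epsilon'})<\#\{\text{standard pairs }(x^{\alpha},S)\text{ of }\rmi\}<Z(n-j,\,D^{k-j+\epsilon'})
\]
for each such $S$. When $k$ is an integer the remaining value $j=k$ is a boundary case not covered by Theorem~\ref{thm:sp-count-whp}; there the number of dimension-$k$ standard pairs is $\deg(\rmi)$ on the event $\dim\rmi=k$ and $0$ otherwise, so Theorem~\ref{thm:general-deg-thm} bounds it a.a.s.\ by $\binom{n}{k}Z(n-k,D^{\epsilon'})$, and $\dim\rmi\le k$ a.a.s.\ by Theorem~\ref{thm:dane-expected-dim}, so no higher-dimensional standard pairs occur.

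Next I would turn these counts into orders of magnitude via~\eqref{eq:number-theory-asymptotic}: since $\log(D^{a})=a\log D$, for fixed $a>0$ and fixed $m\ge1$ one has $Z(m,D^{a})=\Theta\!\bigl(D^{a}(\log D)^{m-1}\bigr)$. Hence the aggregate contribution of the $\binom{n}{j}$ subsets of size $j$ is $\Theta\!\bigl(D^{\,k-j\pm\epsilon'}(\log D)^{\,n-j-1}\bigr)$, and the boundary term (when $k\in\mathbb{Z}$) is $O\!\bigl(D^{\epsilon'}(\log D)^{n-k-1}\bigr)$. Summing over $j=0,1,\dots,\lfloor k\rfloor$, increasing $j$ by one multiplies the order of magnitude by $\Theta\!\bigl(D^{-1}(\log D)^{-1}\bigr)\to0$, so the $j=0$ term — i.e.\ the standard pairs $(x^{\alpha},\emptyset)$ — strictly dominates every other term (and the boundary term, since $\epsilon'<k/2$) by a polynomial-in-$D$ factor. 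Absorbing the constant from Theorem~\ref{thm:sp-count-whp}, the $j\ge1$ terms, and the boundary term into new constants, we obtain a.a.s.
\[
C_{1}\,D^{k-\epsilon'}(\log D)^{n-1}<\adeg(\rmi)<C_{2}\,D^{k+\epsilon'}(\log D)^{n-1},
\]
which by~\eqref{eq:number-theory-asymptotic} is the asserted inequality $C_{1}Z(n,D^{k-\epsilon})<\adeg(\rmi)<C_{2}Z(n,D^{k+\epsilon})$ after renaming $\epsilon'$; the leading factor $(\log D)^{n-1}$, and hence the index $n$, comes precisely from the $S=\emptyset$ contribution. (This reads the ``$t$'' of the statement as $n$.)

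Finally, all of the events above hold a.a.s.\ and there are only $2^{n}$ of them, so their intersection holds a.a.s.\ and the corollary follows. The main obstacle is not any single estimate but the uniformity bookkeeping: one must choose a single $\epsilon'$ (and the auxiliary $\delta$ inside the proof of Theorem~\ref{thm:sp-count-whp}, and inside Lemma~\ref{lem:sp-technical}'s hypothesis $f^{t-1}\gg h^{t}$) that works simultaneously for every $j\le\lfloor k\rfloor$, and then verify that the $j\ge1$ contributions — and, when $k\in\mathbb{Z}$, the unconcentrated dimension-$k$ contribution — are genuinely swamped by the $j=0$ term, so that $\Theta$ of the finite sum equals $\Theta$ of its largest summand. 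Once that is arranged, the proof is just Theorem~\ref{thm:sp-count-whp} (with Theorem~\ref{thm:general-deg-thm}/Theorem~\ref{thm:dane-expected-dim} for the integer boundary) plus the elementary comparison of the functions $Z(n-j,\cdot)$.
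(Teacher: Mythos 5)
Your proof is correct and takes essentially the same route as the paper: both rest on Theorem~\ref{thm:sp-count-whp} applied with $S=\emptyset$ (with $f_0=D^{k-\epsilon}$, $h_0=D^{k+\epsilon}$) together with the observation that the standard pairs indexed by $S=\emptyset$ asymptotically dominate all other contributions to $\adeg(\rmi)$. The paper asserts that domination in a single sentence, while you carry out the subset-by-subset bookkeeping (including the integer-$k$ boundary via Theorem~\ref{thm:general-deg-thm}) explicitly; this is added rigor rather than a different method.
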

\begin{proof}
  This is a consequence of applying Theorem~\ref{thm:sp-count-whp} with $S=\emptyset$ and $h_0=D^{k+\epsilon}$ and $f_0=D^{k+\epsilon}$ while noting that there are always asymptotically more standard pairs with $S=\emptyset$ than any other subset of the variables.

\end{proof}

With some refinement, it should be possible to extend the results of Section \ref{sec:standard-pairs} to give us the ``expected Hilbert polynomial" of a random monomial ideal. Since the Hilbert polynomial is a polynomial, it is somewhat less clear what is a reasonable notion of ``expected''. However, for the choice of ``expected'' determined by the simply taking the expected value of the coefficients, we can use the fact that Hilbert polynomial can be expressed as a sum of binomial coefficients indexed over standard pairs of the ideal:

  \[
HP_{S/I}(t) = \sum_{(x^{\alpha},S)\text{ standard pairs of } I}\binom{t-\left|\alpha\right|+\left|S\right|-1}{\left|S\right|-1}.
\]

However, this involves controlling both the number of standard pairs and the size of the monmial part of the standard pair, and so is not directly derivable from our current results.


\section{Acknowledgements}\label{sec:ack}
This material is based upon work supported by the National Science Foundation under Grant No. DMS-1439786 while the first and second authors were in residence at the Institute for Computational and Experimental Research in Mathematics in Providence, RI, during the Fall 2018 Nonlinear Algebra program.

Additionally, this material is based upon work supported by the National Science Foundation under Grant No. DMS-1522158 and DMS-152266 for the first author and Grant No. DMS-1502553 and DMS-1745638 for the third author.

The authors would like to thank Daniel Erman, Jes\'{u}s A. De Loera, and Serkan Ho\c{s}ten for helpful discussions and comments.

\bibliographystyle{siam}

\bibliography{sample.bib}

\end{document}